\title{Cardinal Characteristics and Countable Borel Equivalence Relations}
\author{Samuel Coskey}
\address{Samuel Coskey, Department of Mathematics, Boise State University, 1910 University Drive, Boise, ID, 83725}
\email{scoskey@nylogic.org}
\urladdr{boolesrings.org/scoskey}
\author{Scott Schneider}
\address{Scott Schneider, Department of Mathematics, University of 
Michigan, 530 Church Street, Ann Arbor, MI, 48109}
\email{sschnei@umich.edu}
\subjclass[2000]{03E15; 03E17}
\keywords{Borel equivalence relations, cardinal characteristics of the continuum}
\newcommand{\downsubset}{\begin{turn}{-90}$\quad\subset\quad$\end{turn}}
\newtheorem{thm}{Theorem}[section]
\newtheorem{prop}[thm]{Proposition}
\newtheorem{lem}[thm]{Lemma}
\theoremstyle{definition}
\newtheorem{defn}[thm]{Definition}
\newtheorem{example}[thm]{Example}
\theoremstyle{remark}
\newtheorem{rem}[thm]{Remark}
\newtheorem{question}[thm]{Question}
\newtheorem*{claim*}{Claim}
\newenvironment{claimproof}{\begin{proof}[Proof of claim]}
  {\end{proof}}
\newcommand{\set}[1]{\left\{#1\right\}}
\newcommand{\seq}[1]{\left\langle#1\right\rangle}
\newcommand{\sset}{\mathsf{set}}
\newcommand{\im}{\mathop{\mathrm{im}}}
\newcommand{\Vojtas}{Vojt\'a\v{s}}
\newcommand{\id}{\mathrm{id}}
\newcommand{\fra}{\mathfrak{a}}\newcommand{\sfa}{\mathsf{a}}
\newcommand{\frb}{\mathfrak{b}}\newcommand{\sfb}{\mathsf{b}}
\newcommand{\frd}{\mathfrak{d}}\newcommand{\sfd}{\mathsf{d}}
\newcommand{\fri}{\mathfrak{i}}\newcommand{\sfi}{\mathsf{i}}
\newcommand{\frp}{\mathfrak{p}}\newcommand{\sfp}{\mathsf{p}}
\newcommand{\frr}{\mathfrak{r}}\newcommand{\sfr}{\mathsf{r}}
\newcommand{\frs}{\mathfrak{s}}\newcommand{\sfs}{\mathsf{s}}
\newcommand{\frt}{\mathfrak{t}}\newcommand{\sft}{\mathsf{t}}
\newcommand{\fru}{\mathfrak{u}}\newcommand{\sfu}{\mathsf{u}}
\newcommand{\frx}{\mathfrak{x}}
\newcommand{\sfpar}{\mathsf{par}}
\begin{document}

\begin{abstract}
  Boykin and Jackson recently introduced a property of countable Borel equivalence relations called Borel boundedness, which they showed is closely related to the union problem for hyperfinite equivalence relations.  In this paper, we introduce a family of properties of countable Borel equivalence relations which correspond to combinatorial cardinal characteristics of the continuum in the same way that Borel boundedness corresponds to the bounding number $\frb$.  We analyze some of the basic behavior of these properties, showing for instance that the property corresponding to the splitting number $\frs$ coincides with smoothness.  We then settle many of the implication relationships between the properties; these relationships turn out to be closely related to (but not the same as) the Borel Tukey ordering on cardinal characteristics.
\end{abstract}

\maketitle

\section{Introduction}

A Borel equivalence relation $E$ on the standard Borel space $X$ is called \emph{hyperfinite} if it can be written as the union of an increasing sequence of Borel equivalence relations with finite equivalence classes. Dougherty, Jackson, and Kechris developed the basic theory of hyperfinite equivalence relations in \cite{djk}, where they asked the following fundamental question which remains open:

\begin{question}[\cite{djk}]
 Is the union of an increasing sequence of hyperfinite equivalence relations hyperfinite? \label{question:unions}
\end{question}

We refer to this as the \emph{union problem}. In \cite{bj}, Boykin and Jackson introduced the notion of \emph{Borel boundedness} and showed that it is closely related to the union problem.

\begin{defn}[\cite{bj}]
  Let $E$ be a countable Borel equivalence relation on the standard Borel space $X$.  Then $E$ is \emph{Borel bounded} if for every Borel function $\phi\colon X\to\omega^\omega$, there exists a Borel function $\psi\colon X\to\omega^\omega$ satisfying $x\mathrel{E}x'\Rightarrow\psi(x)=^*\psi(x')$ and such that for all $x\in X$, $\phi(x)\leq^*\psi(x)$.
\end{defn}

Here $=^*$ and $\leq^*$ are the relations of eventual equality and eventual domination on $\omega^\omega$. Recall that a family of functions $\mathcal{F}\subset\omega^\omega$ is \emph{unbounded} if there is no function $\beta\in\omega^\omega$ such that $\alpha\leq^*\beta$ for all $\alpha\in\mathcal{F}$. The \emph{bounding number}, $\frb$, is defined to be the minimal cardinality of an unbounded family $\mathcal{F}\subset\omega^\omega$.

Of course, no countable family $\mathcal{F}=\set{\alpha_n\in\omega^\omega:n\in\omega}$ can be unbounded, since the function $\beta\in\omega^\omega$ defined by
\begin{equation}\label{eq2}
  \beta(n)=\max_{k\leq n}\alpha_k(n)
\end{equation}
eventually dominates each $\alpha_n\in\mathcal{F}$. Hence for each Borel function $\phi\colon X\to\omega^\omega$ there trivially exists an $E$-invariant function $\psi\colon X\to\omega^\omega$ such that $\phi(y)\leq^*\psi(x)$ for every $x\in X$ and $y\in [x]_E$. But for $E$ to be Borel bounded, the bounding functions $\psi(x)$ cannot depend in an essential way on the enumeration of the equivalence class $[x]_E$.

Boykin and Jackson observed that every hyperfinite equivalence relation is Borel bound\-ed, and then established the following basic link between Borel boundedness and the union problem.

\begin{thm}[\cite{bj}] \label{thm_bj}
If $E$ is the union of an increasing sequence of hyperfinite equivalence relations and $E$ is Borel bounded, then $E$ is hyperfinite.
\end{thm}

What they left open, however, in addition to the union problem itself, is the following important question.

\begin{question}
  Is Borel boundedness equivalent to hyperfiniteness?
\end{question}

There is no known example of a non-hyperfinite countable Borel equivalence relation that is Borel bounded, and the only known
examples of countable Borel equivalence relations that are \emph{not} Borel bounded have been established by Thomas under the additional assumption of Martin's Conjecture on degree invariant Borel maps \cite{mc}.

After seeing Theorem~\ref{thm_bj}, Thomas asked whether other cardinal characteristics could be used in a role similar to that played by $\frb$ in the definition of Borel boundedness. To explain, suppose that $E$ is a countable Borel equivalence relation on the standard Borel space $X$. Many cardinal characteristics of the continuum can be defined as the minimal cardinality of a subset of $\omega^\omega$ (or of $\mathcal{P}(\omega)$, or $[\omega]^\omega$) having some given combinatorial property $P$. Since each such cardinal is uncountable, it will trivially be the case that for every Borel function $\phi\colon X\to\omega^\omega$ there is an $E$-invariant function $\psi\colon X\to\omega^\omega$ such that for each $x\in X$, $\psi(x)$ witnesses the fact that the countable family $\phi([x]_E)$ does not have property $P$. However, if we require $\psi$ to be a Borel function that does not depend essentially on the representative in $[x]_E$, then such a function may or may not exist, depending on $E$.  Thus for each cardinal characteristic whose definition fits our framework, we obtain a new combinatorial property of equivalence relations that corresponds to the given cardinal in the same way that Borel boundedness corresponds to $\frb$. Our goal in this paper is to introduce and investigate these combinatorial properties.

This paper is organized as follows. We first recall some basic facts about countable Borel equivalence relations in Section~2, and then in Section~3 we consider the union problem and its connection to Borel boundedness in greater detail. In Section~4, we introduce a slew of combinatorial properties of countable Borel equivalence relations that are derived from familiar cardinal characteristics of the continuum. For the sake of the general theory, we propose a slight alteration in the definition of Borel boundedness that we show to be equivalent to the one introduced in \cite{bj}. In Section~5, we situate the preceding discussion in the abstract setting of relations and morphisms as developed by \Vojtas\ and Blass, and prove some general results concerning the properties that correspond to the so-called ``tame" cardinal characteristics.  We also show that Thomas's argument that Martin's Conjecture implies there exist non-hyperfinite, non-Borel bounded equivalence relations can be generalized to a large class of our combinatorial properties. In Section~6, we discuss the special case of the splitting number $\frs$, and show that its corresponding property coincides with smoothness. Finally, in Section~7 we establish a diagram of implications.

We wish to thank Simon Thomas for posing the questions that led to this paper.

\section{Preliminaries}

In this section, we recall some basic facts and definitions from the theory of Borel equivalence relations. For a more complete resource on the subject, we refer the reader to \cite{gao}.

A \emph{standard Borel space} is a measurable space $(X,\mathcal{B})$ such that $\mathcal{B}$ arises as the Borel $\sigma$-algebra of some Polish topology on $X$. Here a topological space is \emph{Polish} if it admits a complete, separable metric. For example, Cantor space $2^\omega$ and Baire space $\omega^\omega$ are Polish, as is $\mathbb R$. 
The set $[\omega]^\omega$ of infinite subsets of $\omega$ can be viewed as a Borel subset of $2^\omega$, and hence as a standard Borel space in its own right. The appropriate notion of isomorphism in the context of standard Borel spaces is bimeasurable bijection, which we call \emph{Borel isomorphism}. By a classical result, any two uncountable standard Borel spaces are Borel isomorphic. This will allow us to view the standard Borel spaces $\mathbb R$, $2^\omega$, $\omega^\omega$, and $[\omega]^\omega$ as equivalent, so that we may work on whichever is
most convenient.

An equivalence relation $E$ on the standard Borel space $X$ is called \emph{Borel} if $E$ is Borel as a subset of $X\times X$. The Borel equivalence relation $E$ is \emph{countable} if each of its equivalence classes is countable, \emph{finite} if each $E$-class is finite, and \emph{hyperfinite} if it can be expressed as the union $E=\cup_nF_n$ of an increasing sequence of finite Borel equivalence relations $F_n$.

If $E$ and $F$ are equivalence relations on standard Borel spaces $X$ and $Y$, a Borel function $f\colon X\to Y$ is said to be a \emph{Borel homomorphism} from $E$ to $F$, written $f\colon E\to F$, if for all $x,x'\in X$, $x\mathrel{E}x'$ implies $f(x)\mathrel{F}f(x')$. If $f$ has the stronger property that
\[
x\mathrel{E}x' \ \iff \ f(x)\mathrel{F}f(x')
\]
then $f$ is called a \emph{Borel reduction} from $E$ to $F$. If there exists a Borel reduction from $E$ to $F$, then we say that $E$ is \emph{Borel reducible} to $F$ and write $E\leq_BF$. If $E\leq_BF$ and $F\leq_BE$, then we say that $E$ and $F$ are \emph{Borel bireducible} and write $E\sim_BF$.


A Borel equivalence relation $E$ is said to be \emph{smooth} if there is a standard Borel space $Y$ such that $E\leq_B\Delta(Y)$, where $\Delta(Y)$ denotes the identity relation on $Y$. If $E$ is a countable Borel equivalence relation, then $E$ is smooth if and only if $E$ admits a Borel transversal, \emph{i.e.}, a Borel set $B\subset X$ such that $|B\cap[x]_E|=1$ for every $x\in X$. Every finite Borel equivalence relation is smooth.

Many important countable Borel equivalence relations arise naturally from the action of a countable group. If $\Gamma$ is a countable discrete group acting on the standard Borel space $X$, we write $E^X_{\Gamma}$ for the induced orbit equivalence relation defined by
\[
x\mathrel{E_\Gamma^X}y\iff(\exists\gamma\in\Gamma)\;y=\gamma x\;.
\]
If the action of $\Gamma$ on $X$ is Borel (equivalently, each $\gamma\in\Gamma$ induces a Borel map $x\mapsto \gamma x$) then $E^X_{\Gamma}$ will be a Borel equivalence relation. In fact, by a remarkable result of Feldman and Moore, if $E$ is an arbitrary countable Borel equivalence relation on the standard Borel space $X$, then there exists a countable group $\Gamma$ and a Borel action of $\Gamma$ on $X$ such that $E=E_\Gamma^X$. We shall make frequent use of this representation theorem for countable Borel equivalence relations.

If $X$ is any one of $2^\omega$, $\omega^\omega$, or $[\omega]^\omega$, we write $E_0(X)$ for the eventual equality relation on $X$. All three of these relations are Borel bireducible with one another, so when there is no danger of confusion or the domain is not important we write simply $E_0$ or $=^*$. The relation $E_0$ is the unique non-smooth hyperfinite Borel equivalence relation up to Borel bireducibility, and by the Glimm-Effros dichotomy \cite{hkl}, if $E$ is any nonsmooth Borel equivalence relation then $E_0\leq_BE$. For a survey of the general theory of hyperfinite equivalence relations, see \cite{djk} or \cite{jkl}.


We close this section with an important consequence of the Lusin-Novikov uniformization theorem (see \cite[18.10]{kechris}) that will occur frequently in our arguments.

\begin{prop} \label{prop:section}
  Suppose $X$ and $Y$ are standard Borel spaces, and suppose that $f\colon X\to Y$ is a countable-to-one Borel function. Then $\im(f)$ is a Borel subset of $Y$, and there exists a Borel function $\sigma\colon\im(f)\to X$ such that $f\circ\sigma=\id_{\im(f)}$.
\end{prop}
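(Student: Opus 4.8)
The plan is to deduce everything from the Lusin--Novikov theorem by applying it to the graph of $f$, read in the ``wrong'' direction. Specifically, I would consider the set
\[
  G \defeq \set{(y,x)\in Y\times X : f(x)=y}\;,
\]
which is Borel because $f$ is Borel (it is just the graph of $f$ with coordinates swapped). The point of viewing $G$ inside $Y\times X$ rather than $X\times Y$ is that its sections in the $X$-direction are precisely the fibers of $f$: for each $y\in Y$ we have $G_y=\set{x\in X:f(x)=y}=f^{-1}(\set y)$. Since $f$ is countable-to-one, every such section is countable, so $G$ is a Borel subset of $Y\times X$ with all $y$-sections countable, which is exactly the hypothesis of Lusin--Novikov.

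Applying the theorem then yields two things at once. First, the projection of $G$ onto the $Y$-coordinate is Borel; but this projection is exactly $\im(f)$, so the first assertion follows immediately. Second, Lusin--Novikov provides a decomposition $G=\bigcup_n \operatorname{graph}(g_n)$, where each $g_n$ is a partial Borel function $g_n\from D_n\into X$ defined on a Borel set $D_n\subseteq Y$. Because $G$ projects onto $\im(f)$, the domains $D_n$ cover $\im(f)$, and for every $y$ in the domain of $g_n$ the pair $(y,g_n(y))$ lies in $G$, which means $f(g_n(y))=y$.

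It remains to assemble a single Borel section from the countably many partial functions $g_n$, and this is the only real step requiring care. I would define $\sigma\from\im(f)\into X$ by the standard least-index trick: set $\sigma(y)=g_n(y)$, where $n$ is least with $y\in D_n$. To see this is Borel, partition $\im(f)$ into the Borel pieces $B_n\defeq D_n\setminus\bigcup_{m<n}D_m$; these are Borel since each $D_n$ is, they are pairwise disjoint, and they cover $\im(f)$. On each $B_n$ the function $\sigma$ agrees with the Borel map $g_n$, so $\sigma$ is Borel on each piece and hence Borel on their union. Finally, for any $y\in\im(f)$, if $y\in B_n$ then $f(\sigma(y))=f(g_n(y))=y$, so $f\circ\sigma=\id_{\im(f)}$, as required. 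I do not expect any genuine obstacle here: Lusin--Novikov does all the heavy lifting, and the only thing to verify is that the selection by least index produces a Borel, rather than merely partial or measurable, function---which the disjoint Borel partition guarantees.
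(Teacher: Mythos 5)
Your proposal is correct and takes essentially the same route as the paper: the paper states this proposition without proof, presenting it as a direct consequence of the Lusin--Novikov theorem (Kechris 18.10), and your argument---applying Lusin--Novikov to the graph of $f$ with coordinates swapped, so that its sections are the fibers of $f$, and then assembling the section $\sigma$ via the least-index selection over the resulting Borel partial functions---is exactly the standard derivation being invoked. There is no gap; the disjointification of the domains $D_n$ correctly ensures that $\sigma$ is Borel.
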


\begin{rem}
  The Borel function $\sigma$ given in Proposition~\ref{prop:section} is called a \emph{Borel section} for $f$. If $E$ and $F$ are countable Borel equivalence relations with $E\leq_BF$, then any Borel reduction $f$ from $E$ to $F$ is countable-to-one, and hence admits a Borel section $\sigma\colon\im(f)\to X$.
\end{rem}

\section{Borel boundedness and the union problem}

Question~\ref{question:unions} is perhaps the most basic open problem in the study of hyperfinite equivalence relations. In this
section we give an ``honest attempt" to answer it, and observe how such an attempt leads naturally to the notion of Borel boundedness.

To begin, suppose that $E=\bigcup_nD_n$ is the union of the increasing sequence of hyperfinite equivalence relations $D_n$, where for each $n$, $D_n=\bigcup_mE_n^m$ is the union of the increasing sequence of finite equivalence relations $E_n^m$. This can be pictured as the infinite grid in Figure~\ref{fig:unions}. Further assume that $E$ is the orbit equivalence relation arising from the Borel action of the countable group $\Gamma=\set{\gamma_i:i\in\omega}$, with $\gamma_0=\id$.

\begin{figure}
  \begin{diagram}[h=.5em,w=2em]
    D_0 & = & E_0^0 & \cup & E_0^1 & \cup & E_0^2 & \cup & \\
    \downsubset & & & & & & & & \\
    D_1 & = & E_1^0 & \cup & E_1^1 & \cup & E_1^2 & \cup & \cdots \\
    \downsubset & & & & & & \\
    D_2 & = & E_2^0 & \cup & E_2^1 & \cup & E_2^2 & \cup & \\
    \vdots & & & & & & \\
  \end{diagram}
  \caption{An increasing union of hyperfinite equivalence relations  
  \label{fig:unions}}
\end{figure}

As a Borel subset of $X\times X$, $E$ is itself a standard Borel space. Define the Borel function $\chi_E\colon E\to\omega^\omega$ by 
\[
\chi_E(x,y)(n)=
  \begin{cases}
  \text{the least $m$ such that $x\mathrel{E_n^m}y$}
                   & \text{if $x\mathrel{E}_ny$\;;} \\
  0                & \text{otherwise}\;.
  \end{cases}
\]
Then $\chi_E$ records exactly when a pair of elements $(x,y)\in D_n$ becomes equivalent in the union $D_n=\bigcup_m E_n^m$.

Now, let us attempt to solve the union problem by expressing $E$ as the union $E=\cup_kF_k$ of an increasing sequence of finite Borel equivalence relations $F_k$. Naively, one might simply try to ``choose the right sequence'' through Figure~\ref{fig:unions} and set each $F_k$ equal to an appropriate $E_n^m$, moving down and to the right as $k$ increases. Of course, this is bound to fail, since there is no reason a relation $E_{n}^{m}$ in row $n$ should contain some $E_{n'}^{m'}$ in row $n'$ just because $n>n'$. In fact, it is not too difficult to construct equivalence relations $E^m_n$ as in Figure~\ref{fig:unions} so that for all $n,n',m$, if $n'\ne n$ then $E_n^m\not\supset E_{n'}^0$.

To ensure that the $F_k$ are increasing, as a next step one might try taking them to be unions or intersections of the $E_n^m$. Of
course, the union of two equivalence relations need not be transitive, and there is no reason for the transitive closure of the union of two finite equivalence relations to be finite, so we are led to take intersections of the $E_n^m$. As each row is increasing, we need only take one from each row. Since the union of the $F_k$'s must exhaust $E$, we should start deleting rows from the intersection as $k$ increases. This suggests that we let
\[
F_k=\bigcap_{n\geq k}E_n^{\psi(n)}
\]
for some sequence of choices $\psi\in\omega^\omega$.

All that is left now is to make sure that the union exhausts $E$. For this we need precisely the following condition on $\psi$: for all
$(x,y)\in E$, there exists $n\in\omega$ such that for every $k\geq n$, $\psi(k)\geq\chi_E(x,y)(k)$. In other words, we need 
\[
(\forall z\in E)\; \chi_E(z)\leq^*\psi\;.
\]
Since we cannot expect a single $\psi$ to eventually dominate continuum many  functions, we should allow $\psi=\psi([x]_E)$ to depend on the equivalence classes. Since we want the $F_k$ to be Borel, this dependence will have to be Borel as well. Thus we require a Borel, $E$-invariant function $\psi\colon X\to\omega^\omega$ such that for each $(x,y)\in E$, 
\[
\chi_E(x,y)\leq^*\psi(x)=\psi(y)\;.
\]
If we define $\phi_0\colon X\to\omega^\omega$ by
\[
\phi_0(x)(n)=\max_{i\leq n}\{\chi_E(x,\gamma_ix)\}\;,
\]
then clearly $\chi_E(x,y)\leq^*\phi_0(x)$ for all $x\in X$, and so it will suffice to ask that for each $x\in X$,
\begin{equation} \label{eq1}
  \phi_0(x)\leq^*\psi(x)\;.
\end{equation}
If we ask $\psi$ to eventually dominate \emph{every} Borel function $\phi\colon X\to\omega^\omega$, we arrive at the definition of \emph{invariant Borel boundedness}.

\begin{defn} \label{df:strongbb}
  Let $E$ be a countable Borel equivalence relation on the standard Borel space $X$. Then $E$ is \emph{invariantly Borel bounded} if for every Borel function $\phi\colon X\to\omega^\omega$, there exists an $E$-invariant Borel function $\psi\colon X\to\omega^\omega$ such that $\phi(x)\leq^*\psi(x)$ for all $x\in X$.
\end{defn}

This property is too strong, however, as it is easily seen to be equivalent to smoothness.

\begin{prop} \label{prop:invar}
  Let $E$ be a countable Borel equivalence relation on the standard Borel space $X$. Then $E$ is smooth if and only if $E$ is invariantly Borel bounded.
\end{prop}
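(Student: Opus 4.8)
The plan is to prove the two implications separately: smoothness $\Rightarrow$ invariant Borel boundedness is a direct construction, while the converse is handled by contraposition, reducing the general case to the single relation $E_0$ and then applying a Baire‑category argument.

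First I would show that smoothness implies invariant Borel boundedness. If $E$ is smooth then, by the characterization of smoothness for countable Borel equivalence relations quoted in Section~2, it admits a Borel transversal, and hence a Borel $E$‑invariant selector $s\from X\into X$ with $s(x)\in[x]_E$ for all $x$. Writing $E=E_\Gamma^X$ with $\Gamma=\set{\gamma_i:i\in\omega}$ by Feldman–Moore, the class $[x]_E=[s(x)]_E$ is enumerated as $\set{\gamma_i\,s(x):i\in\omega}$. Given any Borel $\phi\from X\into\omega^\omega$, I would set
\[ \psi(x)(n)\defeq\max_{i\le n}\phi(\gamma_i\,s(x))(n). \]
Since $\psi$ depends only on the invariant $s(x)$, it is $E$‑invariant and Borel, and exactly as in the computation for equation~\eqref{eq2}, writing $x=\gamma_j\,s(x)$ shows $\phi(x)(n)\le\psi(x)(n)$ for all $n\ge j$; thus $\phi(x)\leq^*\psi(x)$, as required.

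For the converse I would argue the contrapositive, reducing to $E_0$ via two observations. The first is that invariant Borel boundedness is inherited under Borel embeddings: if $E_0\sqsubseteq_B E$ via an injective Borel reduction $f$ and $E$ is invariantly Borel bounded, then so is $E_0$. Indeed, by Proposition~\ref{prop:section}, $f$ has a Borel section $\sigma\from\im(f)\into 2^\omega$, which by injectivity of $f$ is a two‑sided inverse, so $\phi(\sigma(f(\alpha)))=\phi(\alpha)$. Given $\phi\from 2^\omega\into\omega^\omega$, push it forward to a Borel $\tilde\phi\from X\into\omega^\omega$ by $\tilde\phi=\phi\circ\sigma$ on $\im(f)$ and $\tilde\phi=0$ elsewhere, apply invariant Borel boundedness of $E$ to get an $E$‑invariant $\tilde\psi$ dominating $\tilde\phi$, and set $\psi=\tilde\psi\circ f$; then $\psi$ is $E_0$‑invariant, Borel, and $\psi(\alpha)\geq^*\tilde\phi(f(\alpha))=\phi(\alpha)$. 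The second observation is the Glimm–Effros dichotomy quoted in Section~2: every nonsmooth $E$ satisfies $E_0\sqsubseteq_B E$. Combining these, it suffices to show that $E_0$ itself is \emph{not} invariantly Borel bounded.

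The heart of the argument — and the step I expect to be the main obstacle — is producing a single Borel $\phi\from 2^\omega\into\omega^\omega$ that defeats every invariant bound. I would use that $E_0$ is generically ergodic (the finite‑support flip group acts on $2^\omega$ with dense orbits), so that any $E_0$‑invariant Borel $\psi\from 2^\omega\into\omega^\omega$ is constant, say equal to $\psi_0$, on a comeager set $C$. It therefore suffices to exhibit $\phi$ for which $\set{\alpha:\phi(\alpha)\leq^*\beta}$ is meager for every fixed $\beta\in\omega^\omega$. I would take
\[ \phi(\alpha)(n)\defeq\min\set{k\ge n:\alpha(k)=1}, \]
setting $\phi(\alpha)(n)=0$ when no such $k$ exists. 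For fixed $\beta$, a short Baire‑category computation shows that each set $\set{\alpha:\exists\,n\ge N\text{ with }\alpha\text{ identically }0\text{ on }[n,\beta(n)]}$ is open dense, so their intersection is comeager; on this comeager set $\phi(\alpha)(n)>\beta(n)$ for infinitely many $n$, whence $\set{\alpha:\phi(\alpha)\leq^*\beta}$ is meager. Now if $E_0$ were invariantly Borel bounded, applying it to this $\phi$ would produce an $E_0$‑invariant Borel $\psi$ with $\phi(\alpha)\leq^*\psi(\alpha)$ for all $\alpha$; restricting to the comeager set $C$ on which $\psi\equiv\psi_0$ forces $C\subseteq\set{\alpha:\phi(\alpha)\leq^*\psi_0}$, contradicting that the latter is meager. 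This contradiction shows $E_0$ is not invariantly Borel bounded, which by the reduction above completes the converse.
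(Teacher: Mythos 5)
Your proposal is correct and takes essentially the same route as the paper: the forward direction is the identical transversal-plus-max construction, and the converse likewise reduces to $E_0$ (via Glimm--Effros together with downward closure of invariant Borel boundedness) and then defeats $E_0$ by a Baire-category argument showing that an $E_0$-invariant Borel map is generically constant while no fixed $\beta\in\omega^\omega$ can generically dominate a gap-detecting function. The only cosmetic differences are that you prove closure under embeddings inline (the paper instead cites the remark following Proposition~\ref{prop:DownwardClosure}) and that you use the ``next element of $\alpha$ after $n$'' function with an explicit open-dense computation, where the paper uses the increasing enumeration $\tau$ and asserts the corresponding set is ``clearly comeager.''
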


\begin{proof}
  Let $E=E^\Gamma_X$ be the orbit equivalence relation arising from the Borel action of the countable group   $\Gamma=\set{\gamma_i:i\in\omega}$ on $X$, where $\gamma_0=\id$. Suppose that $E$ is smooth, and let $B\subset X$ be
  a Borel transversal for $E$. Define the Borel function $\sigma\colon X\to X$ so that for all $x\in X$, $\sigma(x)$ is the unique
  element $y\in B$ such that $x\mathrel{E}y$. Let $\phi\colon X\to\omega^\omega$ be an arbitrary Borel function. Then we may
  define the function $\psi\colon X\to\omega^\omega$ by 
\[
\psi(x)(n)=\max_{i\leq n}\set{\phi(\gamma_i\sigma(x))(n)}\;.
\]
Clearly $\psi$ is Borel and $E$-invariant, and $\phi(x)\leq^*\psi(x)$ for all $x\in X$.

For the converse, we show that $E_0$ is \emph{not} invariantly Borel bounded; the result will then follow from the fact, noted below in the remark immediately following Proposition~\ref{prop:DownwardClosure}, that invariant Borel boundedness is closed downward under Borel reducibility. Thus suppose for contradiction that $E_0$ is invariantly Borel bounded. Identify each $x\in 2^\omega$ with the corresponding subset of $\omega$, and define $\tau(x)$ to be the increasing enumeration of $x$ if $x$ is infinite, and constantly zero otherwise, so that $\tau(x)\in\omega^\omega$. Let $\psi\colon2^\omega\to\omega^\omega$ be an $E_0$-invariant Borel function such that $\tau(x)\leq^*\psi(x)$ for all $x\in 2^\omega$. Let $D\subset 2^\omega$ be a comeager subset on which $\psi$ is continuous, so that
  \[
  \hat{D}=\bigcap_{\gamma\in\Gamma}\gamma D
  \]
  is comeager and $E_0$-invariant. Fix $x_0\in\hat{D}$. Since $[x_0]_{E_0}$ is dense and $\psi$ is constant on $[x_0]_{E_0}$, by
  continuity of $\psi$ we have $\psi(x)=\psi(x_0)$ for all $x\in\hat{D}$. However, the set 
  \[
  \set{x\in 2^\omega:\tau(x)\not\leq^*\psi(x_0)}
  \]
  is comeager, a contradiction.
\end{proof}

In Equation~\ref{eq1}, we required only that $\psi(x)$ \emph{eventually} dominate $\phi_0(x)$ for each $x\in X$, so in fact there is no reason to insist on $\psi$ being $E$-invariant; rather, it will suffice to have $\psi(x)=^*\psi(y)$ whenever $x\mathrel{E}y$, and hence it is more natural to ask for $\psi$ to be quasi-invariant. This leads to the definition introduced by Boykin and Jackson in \cite{bj}.

\begin{defn} \label{df:bb}
  Let $E$ be a countable Borel equivalence relation on the standard Borel space $X$. Then $E$ is \emph{Borel bounded} if for every Borel function $\phi\colon X\to\omega^\omega$, there exists a Borel homomorphism $\psi\colon E\to E_0(\omega^\omega)$ such that for all $x\in X$, $\phi(x)\leq^*\psi(x)$.
\end{defn}

This definition is nontrivial, since for instance any hyperfinite equivalence relation is Borel bounded.  Indeed, suppose that $E=\bigcup_nF_n$ is the union of the increasing sequence of finite Borel equivalence relations $F_n$. 
Then given any Borel function $\phi\colon X\to\omega^\omega$, we can define
\[
\psi(x)(n)=\max\set{\phi(y)(n):y\mathrel{F_n}x}\;,
\]
so that $\psi$ is a Borel homomorphism $E\to E_0(\omega^\omega)$ such that $\phi(x)\leq^*\psi(x)$ for all $x\in X$.

As we saw above, Borel boundedness is tailor-made for obtaining Theorem~\ref{thm_bj}, and we now present the proof.

\begin{proof}[Proof of Theorem~\ref{thm_bj}]
Let $E=\bigcup_nD_n$ be an increasing union, where for each $n$, $D_n=\bigcup_mE_n^m$ is the increasing union of finite Borel equivalence relations $E_n^m$.  Also, let $E$ be the orbit equivalence relation induced by the action of the countable group $\Gamma=\set{\gamma_i:i\in\omega}$, where $\gamma_0=\id$.

  Now define $\phi_0$ as in Equation~\ref{eq1}, so that for each $x$ and $n$, $\phi_0(x)(n)$ is the least $m$ such that:
  \begin{itemize}
  \item whenever $y\in\set{\gamma_0x,\ldots,\gamma_nx}$ and $y\mathrel{D_n}x$, then in fact $y\mathrel{E}_n^mx$.
  \end{itemize}
 Since $E$ is Borel bounded, there exists a Borel homomorphism $\psi\colon E\to E_0(\omega^\omega)$ such that for all $x\in X$, $\phi_0(x)\leq^*\psi(x)$.  We may therefore define $F_n$ by:
   \begin{itemize}
  \item $x\mathrel{F}_ny$ iff for all $k\geq n$, we have $\psi(x)(k)=\psi(y)(k)$ and $x\mathrel{E}_k^{\psi(x)(k)}y$.
  \end{itemize}
  (The ``for all $k\geq n$'' is needed to make the $F_n$ increasing, the ``$\psi(x)(k)=\psi(y)(k)$'' is needed to make $F_n$ symmetric, the ``$E_k$'' is needed to make $F_n$ finite, and the ``$\psi(x)(k)$'' is needed to ensure the $F_n$ will exhaust $E$.)

It is clear that $F_n$ is an increasing sequence of finite equivalence relations contained in $E$.  The last thing to check is that $E=\bigcup_nF_n$.  Indeed, if $x\mathrel{E}y$, then write $y=\gamma_ix$ and choose some $n>i$ such that for all $k\geq n$, we have $x\mathrel{E_k}y$ and
  \[
  \max\set{\phi_0(x)(k),\,\phi_0(y)(k)} \ \leq \ \psi(x)(k) \ = \ \psi(y)(k)\;.
  \]
  Then by definition of $\phi_0$, for each $k\geq n$ we have $x\mathrel{E}_k^{\phi_0(x)(k)}y$ and hence $x\mathrel{E}_k^{\psi(x)(k)}y$.
\end{proof}

As mentioned in the introduction, it is not known whether there exist Borel bounded countable Borel equivalence relations that are not hyperfinite.

We conclude this section with a proof of the fact that Borel boundedness is closed downward under Borel reducibility. This result is Lemma~10 of \cite{bj}, but we present a proof that is designed to motivate our subsequent discussion of several analogous combinatorial properties.

\begin{prop}
  Let $E$ and $F$ be countable Borel equivalence relations on the standard Borel spaces $X$ and $Y$, respectively. If $E\leq_BF$ and
  $F$ is Borel bounded, then $E$ is Borel bounded. \label{prop:DownwardClosure}
\end{prop}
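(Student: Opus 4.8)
The plan is to pull back the bounding data along a Borel reduction $f \from X \into Y$ from $E$ to $F$. The main idea is that, given a Borel function $\phi \from X \into \omega^\omega$ whose $E$-classwise families we must bound, we should transport $\phi$ to a Borel function on $Y$, apply the Borel boundedness of $F$ to obtain a bounding homomorphism on $Y$, and then pull the result back to $X$ via $f$. The key technical tool is the Borel section $\sigma \from \im(f) \into X$ guaranteed by Proposition~\ref{prop:section} and the remark following it, since any Borel reduction is countable-to-one.

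Concretely, I would first use $\sigma$ to define a Borel function $\tilde\phi \from Y \into \omega^\omega$ that encodes $\phi$. The difficulty is that $f$ need not be surjective and a single point $y \in \im(f)$ has a whole $E$-class worth of preimages, so I cannot simply set $\tilde\phi(y) = \phi(\sigma(y))$ and expect to recover all of $\phi$ on $[x]_E$. Instead, writing $F = E^\Gamma_Y$ as an orbit equivalence relation of a countable group $\Gamma = \set{\gamma_i : i \in \omega}$ via Feldman--Moore, I would define, for $y \in \im(f)$,
\[
  \tilde\phi(y)(n) \defeq \max_{i \leq n} \phi\bigl(\sigma(\gamma_i y)\bigr)(n)
\]
whenever $\gamma_i y \in \im(f)$ (and handle indices falling outside $\im(f)$ harmlessly, say contributing $0$), and $\tilde\phi(y) \defeq 0$ for $y \notin \im(f)$. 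The point of averaging over the group orbit is that $\im(f)$ is $F$-invariant (since $f$ is a reduction, $x \mathrel{E} x'$ collapses to a single $F$-class and non-equivalent points map to distinct $F$-classes, so $\im(f)$ is a union of $F$-classes), and as $y$ ranges over $[f(x)]_F \cap \im(f)$ we recover, up to the $\max$, the values $\phi(x')$ for all $x' \in [x]_E$.

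Next, since $F$ is Borel bounded, there is a Borel homomorphism $\tilde\psi \from Y \into \omega^\omega$ from $F$ to $=^*$ with $\tilde\phi(y) \leq^* \tilde\psi(y)$ for all $y$. I would then define $\psi \from X \into \omega^\omega$ by $\psi(x) \defeq \tilde\psi(f(x))$. This $\psi$ is Borel as a composition of Borel maps, and it is a homomorphism from $E$ to $=^*$: if $x \mathrel{E} x'$ then $f(x) \mathrel{F} f(x')$, whence $\tilde\psi(f(x)) =^* \tilde\psi(f(x'))$. Finally I must verify the domination $\phi(x) \leq^* \psi(x)$ for each $x \in X$. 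Taking $y = f(x) \in \im(f)$ and choosing $i$ with $\gamma_i$ acting trivially (e.g.\ an index realizing $y = \gamma_i y$, using $\gamma_0 = \id$ if needed), the construction of $\tilde\phi$ ensures $\phi(x)(n) \leq \tilde\phi(y)(n)$ for all sufficiently large $n$, and hence $\phi(x) \leq^* \tilde\phi(f(x)) \leq^* \tilde\psi(f(x)) = \psi(x)$.

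The step I expect to require the most care is the definition of $\tilde\phi$ and the verification that it genuinely dominates every $\phi(x')$ for $x'$ ranging over the entire $E$-class $[x]_E$, not merely the single representative $\sigma(f(x))$. The subtlety is that the group element $\gamma_i$ sending $f(x)$ to $f(x')$ inside $\im(f)$ must be tracked so that $\sigma(\gamma_i f(x)) = \sigma(f(x'))$ lands in $[x']_E$, and one must confirm that $\sigma$ composed with the $\Gamma$-action sweeps out representatives of every $E$-class element; this is exactly where the countable-to-one structure and the $F$-invariance of $\im(f)$ are essential. Everything else is a routine check that the maps built are Borel and that the $\leq^*$ and $=^*$ relations are preserved.
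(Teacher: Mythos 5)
Your overall architecture (transport $\phi$ to $Y$ via a section, bound there using Borel boundedness of $F$, pull back along $f$) matches the paper's, but there is a genuine gap in the definition of $\tilde\phi$, and it occurs exactly at the step you flagged as delicate. Your $\tilde\phi(y)$ takes a max of values $\phi(\sigma(\gamma_i y))$, i.e.\ of $\phi$ evaluated only at the \emph{section points} $\sigma(z)$ for $z\in[y]_F\cap\im(f)$. These points form a transversal of the fibers of $f$ inside the $E$-class, one point per fiber; they do not exhaust the class. Since a Borel reduction is countable-to-one but need not be injective, a fiber $f^{-1}(\set{y})$ can be infinite, and for $x$ in that fiber with $x\neq\sigma(y)$ the value $\phi(x)$ is never incorporated into $\tilde\phi$ at all. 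Consequently your final verification fails: taking $i=0$ gives $\phi(\sigma(f(x)))(n)\leq\tilde\phi(f(x))(n)$, not $\phi(x)(n)\leq\tilde\phi(f(x))(n)$, and $\sigma(f(x))$ need not equal $x$. Since $\tilde\psi$ is only guaranteed to dominate $\tilde\phi$, nothing forces $\psi(x)=\tilde\psi(f(x))$ to dominate $\phi(x)$. (A secondary slip: $\im(f)$ need not be $F$-invariant --- the image of a reduction can meet an $F$-class in a proper subset --- but your ``contribute $0$'' convention makes that harmless.)

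Note also that your group-orbit sweep addresses a problem that does not actually arise: cross-fiber domination is automatic, because for $x'\mathrel{E}x$ the requirement is $\phi(x')\leq^*\tilde\psi(f(x'))$, which again concerns only the fiber of $f(x')$ itself; the quasi-invariance of $\tilde\psi$ takes care of the rest. The real issue is \emph{within} fibers, and this is precisely what the paper's extra step handles: define $E'\subset E$ by $x\mathrel{E'}x'$ iff $f(x)=f(x')$. This relation is smooth, hence Borel bounded, so there is a Borel homomorphism $\phi'\from X\into\omega^\omega$ from $E'$ to $=^*$ with $\phi(x)\leq^*\phi'(x)$ for all $x$. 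Setting $\tilde\phi(y)\defeq\phi'(\sigma(y))$ on $\im(f)$ then works, since for any $x$ in the fiber of $y$ one has $\phi(x)\leq^*\phi'(x)=^*\phi'(\sigma(y))=\tilde\phi(y)$. Alternatively, you could repair your construction directly by replacing the single section $\sigma$ with a Lusin--Novikov family of countably many Borel sections $\sigma_k\from\im(f)\into X$ whose images cover every fiber, and defining $\tilde\phi(y)(n)\defeq\max_{k\leq n}\phi(\sigma_k(y))(n)$; then every $x$ equals $\sigma_k(f(x))$ for some $k$, so $\phi(x)\leq^*\tilde\phi(f(x))$ as required, and the rest of your argument goes through.
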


\begin{proof}
  Let $\phi\colon X\to\omega^\omega$ be any Borel function. Suppose that $f\colon X\to Y$ is a Borel reduction from $E$ to $F$, and define the equivalence relation $E'\subset E$ on $X$ by $x\mathrel{E'}y$ iff $f(x)=f(y)$. Then $E'$ is smooth, and therefore
  Borel bounded. Let $\phi'\colon E\to E_0(\omega^\omega)$ be a Borel homomorphism such that for all $x\in X$,
  $\phi(x)\leq^*\phi'(x)$. Also let $B\subset X$ be a Borel transversal for $E'$, with $\sigma\colon\im(f)\to X$ a Borel function such that $f\circ\sigma=\id_{\im(f)}$ (see figure~\ref{fig_reduction}).
  \begin{figure}
    \begin{tikzpicture}
      \draw (0,0) rectangle (3,4);
      \draw (0,0) node [anchor=north west] {$E$-class};
      \draw (5,0) rectangle (6,5);
      \draw (5,0) node [anchor=north west] {$F$-class};
      \draw[dotted] (5,4) -- (6,4);
      \foreach \y in {.5,2.5,3,3.5} {
        \draw (.3,\y)--(2.7,\y);
        \draw[fill] (5.5,\y) circle (2pt);
        \draw[fill] (1.5+rand,\y) circle (2pt);
      }
      \draw (1.5,1.5) node {$\vdots$};
      \draw (5.5,1.5) node {$\vdots$};
      \draw[->] (3.3,2.4) -- node[above]{$f$} (4.7,2.4);
      \draw[<-] (3.3,1.6) -- node[below]{$\sigma$} (4.7,1.6);
    \end{tikzpicture}
    \caption{$\sigma$ gives us a base point with which to order each
      fiber of $f$.\label{fig_reduction}}
  \end{figure}
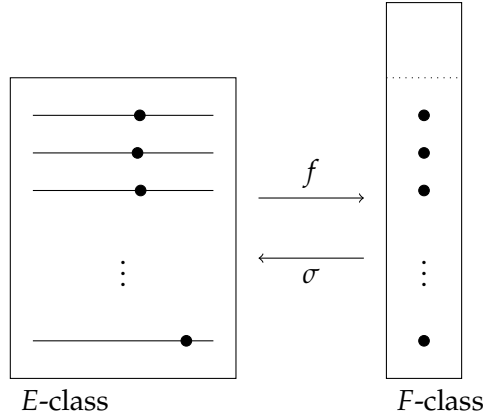
  Now define the Borel function $\tilde{\phi}\colon
  Y\to\omega^\omega$ by
  \[\tilde{\phi}(y)(n)=\begin{cases}
    \phi'(\sigma(y))(n) & \text{if $y\in\im(f)$}\;; \\
    0 & \text{otherwise}\;.
    \end{cases}
  \]
Using the fact that $F$ is Borel bounded, let $\tilde{\psi}\colon F\to E_0(\omega^\omega)$ be a Borel homomorphism such that for all $y\in Y$, $\tilde{\phi}(y)\leq^*\tilde{\psi}(y)$. Finally, let $\psi=\tilde{\psi}\circ f$. Then $\psi$ is a Borel homomorphism from $E$ to $E_0(\omega^\omega)$ such that for all $x\in X$, $\phi(x)\leq^*\psi(x)$.
\end{proof}

In the proof of Proposition~\ref{prop:invar}, we required that \emph{invariant} Borel boundedness is also closed downward under Borel reducibility.  Indeed, this follows using the same argument, since if $F$ is invariantly Borel bounded, then the Borel function $\tilde{\psi}$ in the proof of Proposition~\ref{prop:DownwardClosure} can be chosen to be $F$-invariant, in which case $\psi$ will be $E$-invariant.

\section{Combinatorial properties and cardinal characteristics}

In the definition of Borel boundedness, the Borel function $\phi$ assigns a countable family of elements of $\omega^\omega$ to each $E$-class $[x]_E$.  Since no countable family is unbounded, there is always a witness $\psi(x)$ which bounds $\phi([x]_E)$.  Borel boundedness means that this witness can be chosen in an explicit and quasi-invariant manner that does not depend on an enumeration of $[x]_E$.  An analogous definition can be made in which unbounded families are replaced by other types of families which appear in the study of cardinal characteristics of the continuum: splitting families, maximal almost disjoint families, ultrafilter bases, and so on.  We will now introduce combinatorial properties that correspond to these other cardinal characteristics in the same way that Borel boundedness corresponds to the bounding number $\frb$.

We focus in this section on a few of the most natural combinatorial cardinal characteristics.  The relationship between the sizes of these cardinals is described visually by (a subset of) the so-called van~Douwen diagram, which appears in Figure~\ref{fig_vand}. The article \cite{blass} gives a full discussion of these cardinals and their relationships. As motivating examples we consider the splitting number $\frs$ and the pseudo-intersection number $\frp$.

\begin{figure}
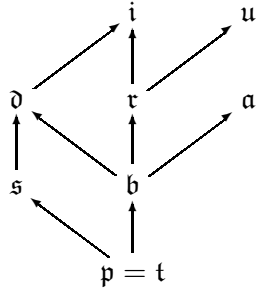

\begin{displaymath}\
\begin{diagram}[h=1.5em,w=2em]
     &       & \fri &       & \fru \\
     & \ruTo & \uTo & \ruTo &      \\
\frd &       & \frr &       & \fra \\
\uTo & \luTo & \uTo & \ruTo &      \\
\frs &       & \frb &       &      \\
     & \luTo & \uTo &       &      \\
     &       & \frp=\frt &       &      \\
\end{diagram}
\end{displaymath}
\caption{Size relationships among several cardinal characterisitcs\label{fig_vand}}
\end{figure}

Given a subset $A\subset\omega$, write $A^c=\omega\smallsetminus A$. Given sets $A,B\subset\omega$, we say that $A$ \emph{splits} $B$ if $|A\cap B|=|A^c\cap B|=\aleph_0$. A family $\mathcal{S}\subset[\omega]^\omega$ of infinite subsets of $\omega$ is a \emph{splitting family} if for every infinite set $B\subset\omega$ there exists $A\in\mathcal{S}$ such that $A$ splits $B$. The \emph{splitting number} $\frs$ is defined to be the minimum cardinality of a splitting family.

A family $\mathcal{F}\subset [\omega]^\omega$ of infinite subsets of $\omega$ is \emph{centered} if every finite subfamily of $\mathcal{F}$ has infinite intersection. The infinite set $A\subset\omega$ is said to be a \emph{pseudo-intersection} of the centered family $\mathcal{F}$ if $A\subset^*B$ for every $B\in\mathcal{F}$. The pseudo-intersection number $\frp$ is defined to be the minimum cardinality of a centered family with no pseudo-intersection.

As it will be relevant later, we briefly sketch a proof of the fact that no countable family of subsets of $\omega$ can be a splitting
family.

\begin{prop} \label{prop:Suncountable}
  $\aleph_0<\frs$.
\end{prop}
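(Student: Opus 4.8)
The plan is to prove the contrapositive: given any countable family $\mathcal{S} = \{A_n : n \in \omega\} \subset [\omega]^\omega$, I will produce a single infinite set $B \subset \omega$ that is split by no member of $\mathcal{S}$, so that $\mathcal{S}$ fails to be splitting. The key reformulation is that $A_n$ does \emph{not} split $B$ exactly when one of $A_n \cap B$, $A_n^c \cap B$ is finite, i.e.\ when $B \subset^* A_n$ or $B \subset^* A_n^c$. Thus it suffices to build an infinite $B$ that is, for each $n$, almost contained in $A_n$ or in its complement.

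First I would construct by recursion a $\subseteq$-decreasing sequence of infinite sets $\omega = B_{-1} \supseteq B_0 \supseteq B_1 \supseteq \cdots$ that ``decides'' each $A_n$. At stage $n$, assuming $B_{n-1}$ is infinite, note that $B_{n-1} = (A_n \cap B_{n-1}) \cup (A_n^c \cap B_{n-1})$, so at least one of the two pieces is infinite; let $B_n$ be such a piece. Then $B_n$ is infinite, $B_n \subseteq B_{n-1}$, and either $B_n \subseteq A_n$ or $B_n \subseteq A_n^c$.

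This decreasing sequence need not have infinite---or even nonempty---intersection, so the remaining and only essential step is to extract a genuine pseudo-intersection by diagonalizing. I would pick $b_0 < b_1 < \cdots$ with $b_n \in B_n$ (possible since each $B_n$ is infinite) and set $B = \{b_n : n \in \omega\}$. For each fixed $n$ and each $k \geq n$ we have $b_k \in B_k \subseteq B_n$, so $B \smallsetminus \{b_0, \ldots, b_{n-1}\} \subseteq B_n$; thus $B$ is infinite and $B \subset^* B_n$ for every $n$. Combined with $B_n \subseteq A_n$ or $B_n \subseteq A_n^c$, this yields $B \subset^* A_n$ or $B \subset^* A_n^c$, so $A_n$ does not split $B$. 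As $n$ was arbitrary, $\mathcal{S}$ is not splitting, and hence no countable family can be splitting, giving $\aleph_0 < \frs$.

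I expect no serious obstacle here: the argument is the standard diagonalization underlying the fact that $\frp$ (and hence $\frs$) is uncountable. The one point deserving care is precisely the last step---one cannot simply take $\bigcap_n B_n$, which may be finite, and must instead thread a single increasing sequence through the nested sets to manufacture the pseudo-intersection.
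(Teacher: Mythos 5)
Your proof is correct. Both you and the paper run the same underlying argument: build a nested sequence $\omega = B_{-1} \supseteq B_0 \supseteq B_1 \supseteq \cdots$ of infinite sets in which $B_n$ decides $A_n$ (i.e.\ $B_n \subseteq A_n$ or $B_n \subseteq A_n^c$), and then thread an increasing sequence $b_n \in B_n$ through the $B_n$ to manufacture a pseudo-intersection that no $A_n$ splits. The one real difference is how the choice at each stage is made: you take whichever of $B_{n-1} \cap A_n$, $B_{n-1} \cap A_n^c$ is infinite, whereas the paper fixes a nonprincipal ultrafilter $\mathcal{U}$ in advance and always keeps the piece lying in $\mathcal{U}$ (which is automatically infinite). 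Your version is the more elementary one, since it needs no ultrafilter and hence no appeal to anything beyond countable dependent choices. The paper's use of $\mathcal{U}$ is deliberate, though: the authors want to emphasize that, unlike the $\max$-diagonalization proving $\aleph_0 < \frb$, the diagonalization for $\frs$ requires a sequence of dependent, seemingly arbitrary choices, and the ultrafilter is their device for packaging those choices uniformly. This point is not cosmetic for them --- it foreshadows Section 6, where they show (Theorem~\ref{thm_splitting}) that this diagonalization genuinely cannot be carried out in a Borel, quasi-invariant way, i.e.\ that property $\sfs$ collapses to smoothness. Your proof, while avoiding the ultrafilter, still makes choices that depend essentially on the enumeration of the $A_n$, so it is consistent with, and in fact illustrates, the same phenomenon.
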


\begin{proof}
  Let $\set{A_n:n\in\omega}$ be a countable family of subsets of $\omega$. Given some nonprincipal ultrafilter $\mathcal{U}$ on $\omega$, we can set $B_{-1}=\omega$ and then inductively define $B_{n+1}$ to be whichever of the sets $B_n\cap A_n$, $B_n\cap A_n^c$ is in $\mathcal{U}$. Then each $B_n$ is infinite, and if we inductively choose $b_{n+1}\in B_{n+1}$ distinct from $b_0,\ldots,b_n$, then $\set{b_n:n\in\omega}$ is not split by any $A_n$.
\end{proof}

It follows that if $E$ is a Borel equivalence relation on the standard Borel space $X$, then for each Borel function $\phi\colon X\to[\omega]^\omega$, there trivially exists an $E$-invariant function $\psi\colon X\to[\omega]^\omega$ such that for each $x\in X$, $\psi(x)$ witnesses the fact that $\phi([x]_E)$ is not a splitting family; \emph{i.e.}, such that for each $x\in X$, $\psi(x)\subset^*\phi(x)$ or $\psi(x)\subset^*\phi(x)^c$. In analogy with Borel boundedness, we might therefore call $E$ \emph{Borel non-splitting} if such a function $\psi$ can be chosen in an explicit and quasi-invariant fashion.

\begin{defn}[temporary] \label{df:tempS}
  Let $E$ be a countable Borel equivalence relation on the standard Borel space $X$. Then $E$ is \emph{Borel non-splitting} if for every Borel function $\phi\colon X\to[\omega]^\omega$, there exists a Borel homomorphism $\psi\colon E\to E_0([\omega]^\omega)$ such that for all $x\in X$, $\psi(x)\subset^*\phi(x)$ or $\psi(x)\subset^*\phi(x)^c$.
\end{defn}

Similarly, every countable centered family $\mathcal{F}\subset[\omega]^\omega$ has a pseudo-intersection; hence $\aleph_0<\frp$, which suggests the following definition.

\begin{defn}[temporary] \label{df:tempP}
  Let $E$ be a countable Borel equivalence relation on the standard Borel space $X$. Then $E$ is \emph{Borel pseudo-intersecting} if for every Borel function $\phi\colon X\to[\omega]^\omega$ such that the family $\set{\phi(y):y\mathrel{E}x}$ is centered for all $x\in X$, there exists a Borel homomorphism $\psi\colon E\to E_0([\omega]^\omega)$ such that for all $x\in X$,
  $\psi(x)\subset^*\phi(x)$.
\end{defn}

If these are to be reasonable properties of countable Borel equivalence relations, then it is highly desirable for them to be closed downward under Borel reducibility, or at least to be $\sim_B$-invariant. However, two different problems arise if one attempts to prove the analogue of Proposition~\ref{prop:DownwardClosure} for the notions just defined.

Notice that the proof of Proposition~\ref{prop:DownwardClosure} involved two separate diagonalizations. First, each fiber $f^{-1}(\set{y})$ yielded a countable family $\set{\phi(x):f(x)=y}\subset\omega^\omega$ that was eventually dominated by $\phi'(\sigma(y))$; then the countable family $\set{\phi'(\sigma(z)):z\in [y]_F}$ was eventually dominated by $\tilde{\psi}(y)$ and this served to eventually dominate the entire original family $\set{\phi(x):x\mathrel{E}\sigma(y)}$.  Letting $\frx$ be the cardinal under consideration, it is apparent that this argument only works if the property of ``witnessing that $\aleph_0<\frx$'' is transitive.  It clearly is in the case of $\frb$, but if each $B_n\subset\omega$ witnesses that $\set{A_n^m:m\in\omega}$ is not splitting, and if $C\subset\omega$ witnesses that $\set{B_n:n\in\omega}$ is not splitting, then nevertheless $C$ may very well be split by some set $A_n^m$.

The pseudo-intersecting property yields an even more fundamental problem. If 
\[
\set{A_n^m:m,n\in\omega}
\]
is a centered family of subsets of $\omega$, and if $B_n$ is a pseudo-intersection of $\set{A_n^m:m\in\omega}$ for each $n$, then there is no reason for $\set{B_n:n\in\omega}$ even to be centered, so there is no way to carry out a second diagonalization.

Consequently we propose the following slight adjustment in the definition of Borel boundedness, justified below by Proposition~\ref{prop:bb=prime}, so that it can properly serve as a model for the general theory. Recall that if $X$ is a standard Borel space, then we define the equivalence relation $E_\sset(X)$ on $X^\omega$ by
\[
\seq{x_n}\mathrel{E_\sset(X)}\seq{x'_n}\iff \set{x_n:n\in\omega}=\set{x'_n:n\in\omega}\;.
\]
If $X$ is clear from context, we shall often write $E_\sset$ instead of $E_\sset(X)$.

\begin{defn} \label{df:prime}
  Let $E$ be a countable Borel equivalence relation on the standard Borel space $X$. Then $E$ has property $\sfb$ ($E$ is \emph{Borel bounded}) if for every Borel homomorphism $\phi\colon X\to(\omega^\omega)^\omega$ from $E$ to $E_\sset(\omega^\omega)$, there exists a Borel homomorphism $\psi\colon X\to \omega^\omega$ from $E$ to $E_0(\omega^\omega)$ such that for all $x\in X$ and for all $n\in\omega$, $\phi(x)(n)\leq^*\psi(x)$.
\end{defn}

In the original Definition~\ref{df:bb}, the function $\phi$ assigns a countable family of functions $\phi([x]_E)\subset\omega^\omega$ to each $E$-class by associating a single one to each element in the class. In Definition~\ref{df:prime}, we give \emph{each} element $x\in X$ knowledge of the entire family of functions that $\phi$ associates to $[x]_E$. As we will see below, this slight tweak will enable us to prove downward closure under $\leq_B$ for all of the properties defined below.

\begin{prop} \label{prop:bb=prime}
  Let $E$ be a countable Borel equivalence relation on the standard Borel space $X$. Then $E$ is Borel bounded in the sense of
  Definition~\ref{df:bb} if and only if $E$ has property $\sfb$, \emph{i.e.}, is Borel bounded in the sense of Definition~\ref{df:prime}.
\end{prop}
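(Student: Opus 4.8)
The plan is to prove both implications directly, passing from the single-valued functions of Definition~\ref{df:bb} to the sequence-valued homomorphisms of Definition~\ref{df:prime} by means of the Feldman--Moore enumeration of each class, and passing back by the standard diagonal domination trick of Equation~\ref{eq2}.

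First I would assume $E$ has property $\sfb$ and let $\phi\from X\into\omega^\omega$ be an arbitrary Borel function as in Definition~\ref{df:bb}. Writing $E=E_\Gamma^X$ for the orbit equivalence relation of a countable group $\Gamma=\set{\gamma_i:i\in\omega}$ with $\gamma_0=\id$, I would define the Borel function $\Phi\from X\into(\omega^\omega)^\omega$ by $\Phi(x)(n)\defeq\phi(\gamma_n x)$. Since $\set{\gamma_n x:n\in\omega}=[x]_E$, the set $\set{\Phi(x)(n):n\in\omega}$ equals $\phi([x]_E)$ and so depends only on the $E$-class of $x$; hence $\Phi$ is a Borel homomorphism from $E$ to $E_\sset(\omega^\omega)$. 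Property $\sfb$ then supplies a Borel homomorphism $\psi\from X\into\omega^\omega$ from $E$ to $=^*$ with $\Phi(x)(n)\leq^*\psi(x)$ for all $x$ and $n$; taking $n=0$ gives $\phi(x)=\Phi(x)(0)\leq^*\psi(x)$, so $\psi$ witnesses that $E$ is Borel bounded in the sense of Definition~\ref{df:bb}.

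Conversely, I would assume $E$ is Borel bounded in the sense of Definition~\ref{df:bb} and let $\phi\from X\into(\omega^\omega)^\omega$ be any Borel homomorphism from $E$ to $E_\sset(\omega^\omega)$. The idea is to collapse the countable family $\set{\phi(x)(n):n\in\omega}$ attached to each $x$ into a single dominating function by diagonalizing as in Equation~\ref{eq2}: define $\hat\phi\from X\into\omega^\omega$ by $\hat\phi(x)(n)\defeq\max_{k\leq n}\phi(x)(k)(n)$. Then $\hat\phi$ is Borel, and for each fixed $x$ and $k$ one has $\phi(x)(k)\leq^*\hat\phi(x)$, exactly as in the observation following Equation~\ref{eq2}. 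Applying Definition~\ref{df:bb} to $\hat\phi$ yields a Borel homomorphism $\psi\from X\into\omega^\omega$ from $E$ to $=^*$ with $\hat\phi(x)\leq^*\psi(x)$ for all $x$, whence $\phi(x)(n)\leq^*\hat\phi(x)\leq^*\psi(x)$ for all $x$ and $n$, which is precisely the conclusion demanded by property $\sfb$.

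Neither direction presents a genuine obstacle; the content is simply that property $\sfb$ hands each point the entire family $\phi([x]_E)$ at once, whereas Definition~\ref{df:bb} doles out one function per point, and the Feldman--Moore enumeration lets one reconstruct the family from the individual values. The only point that requires real care is verifying, in the forward direction, that $\Phi$ is honestly a homomorphism into $E_\sset(\omega^\omega)$ rather than merely a Borel map; this rests on the fact that $\set{\gamma_n x:n\in\omega}$ enumerates the whole class $[x]_E$, so that the unordered set $\set{\Phi(x)(n):n}$ is $E$-invariant. The Borelness of $\Phi$ and of $\hat\phi$, and the domination estimates, are all immediate.
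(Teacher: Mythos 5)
Your proof is correct and follows essentially the same route as the paper's: the forward direction uses the Feldman--Moore enumeration $\Phi(x)(n)=\phi(\gamma_n x)$ to package each class into a homomorphism to $E_\sset(\omega^\omega)$, and the converse uses the diagonal maximum $\hat\phi(x)(n)=\max_{k\leq n}\phi(x)(k)(n)$, exactly as in the paper. No gaps; the verification that $\Phi$ lands in $E_\sset(\omega^\omega)$ is the same observation the paper makes implicitly.
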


\begin{proof}
  Let $E$ be the orbit equivalence relation arising from the Borel action of the countable group $\Gamma=\seq{\gamma_n:n\in\omega}$. Suppose $E$ has property $\sfb$, and let $\phi\colon X\to\omega^\omega$ be a Borel function. Then for each $x\in X$ and $n\in\omega$, define $\phi'(x)(n)=\phi(\gamma_nx)$, so that $\phi'$ is a Borel homomorphism from $E$ to $E_\sset(\omega^\omega)$. If $\psi$ is a Borel homomorphism from $E$ to $E_0(\omega^\omega)$ such that $\phi'(x)(n)\leq^*\psi(x)$ for all $x\in X$ and $n\in\omega$, then $\phi(x)\leq^*\psi(x)$ for all $x\in X$.

  Conversely, suppose $E$ is Borel bounded and let $\phi'\colon X\to(\omega^\omega)^\omega$ be a Borel homomorphism from $E$ to $E_\sset(\omega^\omega)$. Then for each $x\in X$ and $n\in\omega$, define
  \[
  \phi(x)(n)=\max_{k\leq n}\set{\phi'(x)(k)(n)}\;,
  \]
  so that $\phi\colon X\to\omega^\omega$ is a Borel function such that for each $x\in X$ and $n\in\omega$, $\phi'(x)(n)\leq^*\phi(x)$. Obtain a Borel homomorphism $\psi\colon E\to E_0(\omega^\omega)$ such that   $\phi(x)\leq^*\psi(x)$ for all $x\in X$. Then $\phi'(x)(n)\leq^*\psi(x)$ for each $x\in X$ and $n\in\omega$.
\end{proof}

We are now ready to introduce a zoo of combinatorial properties of countable Borel equivalence relations, each of which corresponds to a cardinal characteristic of the continuum in the same way that Borel boundedness corresponds to $\frb$.

\begin{defn}
  \label{df:list}
  Let $E$ be a countable Borel equivalence relation on the standard Borel space $X$. In each of the following, $\phi$ and $\psi$ always denote Borel homomorphisms.
\begin{list}{$\circ$}{\leftmargin=5mm\itemsep=3mm\itemindent=-2mm}
\item $E$ has property $\sfb$ ($E$ is \emph{Borel bounded}) if for every $\phi\colon E\to E_\sset(\omega^\omega)$, there exists $\psi\colon E\to E_0(\omega^\omega)$ such that for all $x\in X$ and $n\in\omega$, $\phi(x)(n)\leq^*\psi(x)$.

\item $E$ has property $\sfd$ ($E$ is \emph{Borel non-dominating}) if for every $\phi\colon E\to E_\sset(\omega^\omega)$, there exists $\psi\colon E\to E_0(\omega^\omega)$ such that for all $x\in X$ and $n\in\omega$, $\psi(x)\not\leq^*\phi(x)(n)$.

\item $E$ has property $\sfs$ ($E$ is \emph{Borel non-splitting}) if for every $\phi\colon E\to E_\sset([\omega]^\omega)$, there exists $\psi\colon E\to E_0([\omega]^\omega)$ such that for all $x\in X$ and $n\in\omega$, $\psi(x)\subset^*\phi(x)(n)$ or $\psi(x)\subset^*\phi(x)(n)^c$.

\item $E$ has property $\sfr$ ($E$ is \emph{Borel reapable}) if for every $\phi\colon E\to E_\sset([\omega]^\omega)$, there exists $\psi\colon E\to E_0([\omega]^\omega)$  such that for all $x\in X$ and $n\in\omega$, $|\psi(x)\cap\phi(x)(n)|=|\psi(x)^c\cap\phi(x)(n)|=\aleph_0$.

\item $E$ has property $\sfp$ ($E$ is \emph{Borel pseudo-intersecting}) if for every $\phi\colon E\to E_\sset([\omega]^\omega)$ such that $\set{\phi(x)(n):n\in\omega}$ is centered for each $x\in X$, there exists $\psi\colon E\to E_0([\omega]^\omega)$ such that for all $x\in X$ and $n\in\omega$, $\psi(x)\subset^*\phi(x)(n)$.

\item $E$ has property $\sft$ ($E$ is \emph{Borel tower-plugging}) if for every $\phi\colon E\to E_\sset([\omega]^\omega)$ such that $\set{\phi(x)(n):n\in\omega}$ admits a well-ordering compatible with $\subset^*$, there exists $\psi\colon E\to E_0([\omega]^\omega)$ such that for all $x\in X$ and $n\in\omega$, $\psi(x)\subset^*\phi(x)(n)$.

\item $E$ has property $\sfa$ ($E$ is \emph{Borel non-mad}) if for every $\phi\colon E\to E_\sset([\omega]^\omega)$ with the property that $\set{\phi(x)(n):n\in\omega}$ is almost disjoint for all $x\in X$, there exists $\psi\colon E\to E_0([\omega]^\omega)$ such that for all $x\in X$ and $n\in\omega$, $|\psi(x)\cap\phi(x)(n)|<\aleph_0$.

\item $E$ has property $\sfi$ ($E$ is \emph{Borel non-maximally-independent}) if for every $\phi\colon E\to E_\sset([\omega]^\omega)$ such that $\set{\phi(x)(n):n\in\omega}$ is independent for all $x\in X$, there exists $\psi\colon E\to E_0([\omega]^\omega)$ such that for all $x\in X$, the set $\set{\psi(x)}\cup\set{\phi(x)(n):n\in\omega}$ is independent and $\psi(x)\not=^*\phi(x)(n)$ for all $n\in\omega$.

\item $E$ has property $\sfu$ if for every $\phi\colon E\to E_\sset([\omega]^\omega)$ such that $\set{\phi(x)(n):n\in\omega}$ is centered for all $x\in X$, there exists $\psi\colon E\to E_0([\omega]^\omega)$ such that for all $x\in X$ and $n\in\omega$, $|\psi(x)\cap\phi(x)(n)|=|\psi(x)^c\cap\phi(x)(n)|=\aleph_0$.
\end{list}
\end{defn}

Thus we obtain combinatorial properties for each of the cardinal characteristics $\frb$, $\frd$, $\frs$, $\frr$, $\frp$, $\fra$, $\fri$, $\frt$, and $\fru$, which for convenience we denote with the same letters in a sans-serif font. We include both $\frp$ and $\frt$ even though $\frp=\frt$; these properties are defined differently and may not be the same.

It is easy to show that each of these properties is closed downward under containment and Borel reducibility. This is proved generally in the next section, but we sketch here the proof for property $\sfp$ to illustrate the motivation behind the use of $E_\sset$ in Definition~\ref{df:list}.

\begin{prop}
Let $E$ and $F$ be countable Borel equivalence relations on the standard Borel spaces $X$ and $Y$, respectively. If $F$ has property $\sfp$ and $E\leq_BF$, then $E$ has property $\sfp$.
\end{prop}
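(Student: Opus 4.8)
The plan is to mimic the proof of Proposition~\ref{prop:DownwardClosure}, but to exploit the $E_\sset$-formulation of Definition~\ref{df:list} so that only a \emph{single} application of property $\sfp$ for $F$ is required; this is exactly what avoids the two-stage diagonalization whose failure for $\sfp$ was described above. Fix a Borel reduction $f\from X\into Y$ from $E$ to $F$, and by the remark following Proposition~\ref{prop:section} let $\sigma\from\im(f)\into X$ be a Borel section with $f\circ\sigma=\id_{\im(f)}$. By Feldman--Moore, write $F=E_\Delta^Y$ for the Borel action of a countable group $\Delta=\set{\delta_j:j\in\omega}$. Let $\phi\from X\into([\omega]^\omega)^\omega$ be any Borel homomorphism from $E$ to $E_\sset$ such that $\set{\phi(x)(n):n\in\omega}$ is centered for each $x\in X$.

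First I would transport $\phi$ to a Borel homomorphism $\tilde\phi\from Y\into([\omega]^\omega)^\omega$ from $F$ to $E_\sset$. The set $\im(f)$ is Borel, and its $F$-saturation $[\im(f)]_F=\bigcup_j\delta_j\cdot\im(f)$ is a Borel, $F$-invariant set. On this saturation define the Borel selector $r(y)=\sigma(\delta_{j(y)}y)$, where $j(y)$ is the least $j$ with $\delta_jy\in\im(f)$, and set $\tilde\phi(y)=\phi(r(y))$; outside the saturation set $\tilde\phi(y)(n)=\omega$ for all $n$. The key point is that $\tilde\phi$ is genuinely a homomorphism from $F$ to $E_\sset$: if $y\mathrel{F}y'$ both lie in the saturation, then $f(r(y))\mathrel{F}f(r(y'))$, so $r(y)\mathrel{E}r(y')$ because $f$ is a reduction, whence $\phi(r(y))\mathrel{E_\sset}\phi(r(y'))$; and since the saturation is $F$-invariant the two cases never mix. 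Moreover each family $\set{\tilde\phi(y)(n):n\in\omega}$ is centered, being either $\set{\phi(r(y))(n):n\in\omega}$ or $\set{\omega}$. Thus $\tilde\phi$ is a legitimate input for property $\sfp$ of $F$.

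Next, applying property $\sfp$ to $F$ yields a Borel homomorphism $\tilde\psi\from Y\into[\omega]^\omega$ from $F$ to $=^*$ with $\tilde\psi(y)\subset^*\tilde\phi(y)(n)$ for all $y$ and $n$. I would then set $\psi=\tilde\psi\circ f$. That $\psi$ is a Borel homomorphism from $E$ to $=^*$ is immediate, since $x\mathrel{E}x'$ forces $f(x)\mathrel{F}f(x')$ and hence $\psi(x)=^*\psi(x')$. For the pseudo-intersection conclusion, fix $x\in X$ and put $y=f(x)\in\im(f)$. Then $f(r(y))=\delta_{j(y)}y\mathrel{F}y=f(x)$, so $r(y)\mathrel{E}x$, and since $\phi$ is a homomorphism to $E_\sset$ we obtain the crucial set-equality $\set{\phi(r(y))(n):n\in\omega}=\set{\phi(x)(n):n\in\omega}$. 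Because $\tilde\psi(y)\subset^*\phi(r(y))(m)$ for every $m$, it follows that $\psi(x)=\tilde\psi(y)\subset^*\phi(x)(n)$ for every $n$, as required.

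I expect the main obstacle to be the construction of $\tilde\phi$ in the second step: unlike in Proposition~\ref{prop:DownwardClosure}, where the input was an arbitrary Borel function, here $\tilde\phi$ must itself be an $F$-homomorphism to $E_\sset$ with centered fibers, yet $\im(f)$ need not be $F$-invariant. Passing to the $F$-saturation and choosing representatives by least group index is what repairs this in a Borel, well-defined way. Once $\tilde\phi$ is in place, the decisive simplification is that the entire family $\set{\phi(x)(n):n\in\omega}$ travels through the argument intact (up to the choice of $E$-class representative), so a single pseudo-intersection produced on the $F$-side serves the whole family on the $E$-side; centeredness is never re-imposed and no transitivity of witnesses is needed, which is precisely the defect of the temporary definitions that the $E_\sset$-formulation was designed to cure.
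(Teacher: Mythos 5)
Your proof is correct and follows essentially the same route as the paper's: transport $\phi$ through the Borel section $\sigma$ to obtain an $F$-homomorphism $\tilde\phi$ into $E_\sset$ with centered families, apply property $\sfp$ of $F$ once, and pull the resulting $\tilde\psi$ back along $f$. The one place you go beyond the paper is in handling the domain of $\tilde\phi$: the paper defines $\tilde\phi(y)\defeq\phi(\gamma_{n_0}\sigma(y))(n_1)$ only where $\sigma$ is defined and leaves the extension to all of $Y$ implicit, whereas you make it explicit by passing to the $F$-saturation of $\im(f)$ (choosing representatives by least group index) and setting $\tilde\phi$ constant outside it --- a legitimate and in fact necessary repair, since $\im(f)$ need not be $F$-invariant and an arbitrary extension need not remain a homomorphism to $E_\sset$.
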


\begin{proof}
Let $E$ be the orbit equivalence relation arising from the Borel action of the countable group $\Gamma=\set{\gamma_i:i\in\omega}$ on $X$. Suppose that $f\colon X\rightarrow Y$ is a Borel reduction from $E$ to $F$, and let $\sigma\colon\im(f)\to X$ be a Borel function such that $f\circ\sigma=\id_{\im(f)}$, as depicted in Figure~\ref{fig_reduction}.  %
Note that $\im(f)$ is Borel and that $F'=F\upharpoonright\im(f)$ has property $\sfp$ (see \ref{thm:closure}(i)). %
Suppose $\phi\colon E\to E_\sset([\omega]^\omega)$ is a Borel homomorphism such that for every $x\in X$, the family $\set{\phi(x)(n):n\in\omega}$ is centered. We shall define another homomorphism $\tilde\phi\colon F'\to E_\sset([\omega]^\omega)$ with the analogous property.  First, let $n\mapsto\seq{n_0,n_1}$ denote a fixed pairing function. Define
  \[
  \tilde\phi(y)(n)=\phi(\gamma_{n_0}\sigma(y))(n_1)\;.
  \]
Then $\tilde\phi$ is a Borel homomorphism from $F'$ to $E_\sset([\omega]^\omega)$ such that $\set{\tilde{\phi}(y)(n):n\in\omega}$ is centered for each $y$, so using property $\sfp$ fix a Borel homomorphism $\tilde{\psi}\colon F'\to E_0([\omega]^\omega)$ such that $\tilde{\psi}(y)\subset^*\tilde{\phi}(y)(n)$ for all $y$ and $n$. Now define $\psi=\tilde{\psi}\circ f$. Since $\phi(x)$ and $\tilde{\phi}(f(x))$ enumerate the same families, we have $\psi(x)\subset^*\phi(x)(n)$ for all $n$.
\end{proof}

Our aim is now to establish the basic relationships between these properties, and attempt to locate them as far as possible within the hierarchy of countable Borel equivalence relations under the partial (pre)-order of Borel reducibility.

As we shall see, there is a rough correspondence between ZFC-provable inequalities among cardinal characteristics and implications among the combinatorial properties we have introduced. For example, it is immediate from the definitions that every Borel bounded relation is also Borel non-dominating, for the same reason that $\frb\leq\frd$. We express this succinctly by writing $\sfb\rightarrow\sfd$. Likewise, the fact that every tower is a centered family shows both that $\frp\leq\frt$ and also that $\sfp\rightarrow\sft$.

Figure~\ref{fig_invar} displays the basic implications between properties corresponding to cardinal characteristics, which will be proved in Section~7. A special case appears to be property $\sfs$, which we show in Section~6 to be equivalent to smoothness. At present, most of the implications in Figure~\ref{fig_invar} are simply due to ``obvious'' inequalities between cardinal characteristics. We hope that the diagram can be improved and expanded in the future.

\begin{figure}[ht]
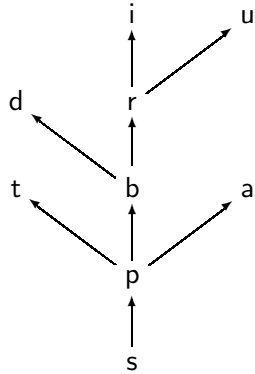

\begin{displaymath}
\begin{diagram}[h=1.5em,w=2em]
     &       & \sfi &       & \sfu \\
     &       & \uTo & \ruTo &      \\
\sfd &       & \sfr &       &      \\
     & \luTo & \uTo &       &      \\
\sft &       & \sfb &       & \sfa \\
     & \luTo & \uTo & \ruTo &      \\
     &       & \sfp &       &      \\
     &       & \uTo &       &      \\
     &       & \sfs &       &      \\
\end{diagram}
\end{displaymath}
\caption{Relationships among combinatorial properties\label{fig_invar}}
\end{figure}

It is possible that this forest of definitions will give rise to a varied and interesting family of properties.  It is also conceivable that each of these properties implies hyperfiniteness, and hence that the whole diagram collapses.  If this is the case, then we will be left with a great number of new characterizations of hyperfiniteness. In light of the challenges in this subject, if the diagram does not collapse then it may be very difficult to prove that this is the case.

We conclude this section by discussing briefly some of the motivations for studying and expanding the diagram in Figure~\ref{fig_invar}, and by considering what meaning the relationships between these properties might have for the cardinals themselves. The diagram may be regarded as being stratified into layers, though we do not know where the boundaries lie at the moment.  At the bottom of the diagram we find the characterizations of smoothness, such as property $\sfs$.  Just above that should be the characterizations of (nonsmooth) hyperfiniteness.  It is not known whether any of the properties in the diagram lie at this level.

From the perspective of the union problem, the most interesting layer is the one consisting of those properties $\mathsf{x}$ that do not imply hyperfiniteness, but additionally have the Boykin-Jackson property: if $E$ is an increasing union of hyperfinite equivalence  relations and $E$ has property $\mathsf{x}$ then $E$ is hyperfinite.  Once again, we do not know if this layer is nonempty, but it is at this layer that we have a solution for the union problem.

The top layer consists of those properties which hold of all countable Borel equivalence relations.  Once again, we do not know any (nontrivial) property which lies at this layer.  But if this layer were to overlap with the previous, then the union problem would be completely solved.

Finally, we suggest another simple meaning for our combinatorial properties. Each one concerns a family of a certain type: a dominating family, a maximal tower, a centered family with no pseudointersection, and so on. None of these families can be countable, as can usually be established by a straightforward diagonalization argument. However, the witness constructed in this diagonalization typically depends on a well-ordering of the family. The corresponding combinatorial property helps us to gauge the extent of this dependence; the higher the corresponding property lies in the diagram, the easier it is to diagonalize in a way that does not explicitly depend on the well-order.

As an example, consider the diagonalization that is used to prove that no countable family can be unbounded, compared to that used in the proof of Proposition~\ref{prop:Suncountable} to show that no countable family can be a splitting family. The construction given in the latter argument is less explicit and depends to a greater extent on the ordering of the sets $\langle A_n\rangle$ than that given in Equation~\eqref{eq2} to show that $\aleph_0<\frb$. In particular, it requires that we make a sequence of dependent and seemingly arbitrary choices, for which we enlisted the aid of a nonprincipal ultrafilter. In Section~6 we demonstrate that in fact the diagonalization cannot be carried out in a Borel and (quasi-)invariant fashion when the countable families are given by $E_0$-classes.

\section{A general framework for combinatorial properties}

Most cardinal characteristics, including those considered in Section~4, can be described abstractly as the norm of a suitable relation. See, for instance, \Vojtas~\cite{voj} and Blass \cite{blass-borel} for a development of this approach.  We now extend our discussion to this setting, and define combinatorial properties corresponding to nearly any cardinal of this type.

Following \Vojtas, let a \emph{relation} be a triple $\bm{A}=(A_-,A_+,A)$, where $A\subset A_-\times A_+$.  We think of $A_-$ as a set of ``challenges,'' $A_+$ as a set of ``responses,'' and $A$ as a ``dominating relation.''  We say that a family $\mathcal{F}\subset A_+$ is \emph{dominating} with respect to the relation $\bm{A}$ if for any challenge $x\in A_-$ there exists a
response $y\in\mathcal{F}$ such that $x\mathrel{A}y$.  Let $D(\bm{A})$ consist of all families $\mathcal{F}\subset A_+$ that are dominating with respect to $\bm{A}$. Then the \emph{dominating number} or \emph{norm} of the relation $\bm{A}$ is the cardinal characteristic
\[
\|\bm{A}\|=\min\set{|\mathcal F|:\mathcal F\in D(\bm{A})}\,.
\]
For example, the dominating number $\mathfrak d$ is the norm of the relation $(\omega^\omega,\omega^\omega,\mathord{\leq}^*)$, and the splitting number $\mathfrak s$ is the norm of the relation $([\omega]^\omega,\mathcal P(\omega),\text{``is split by''})$. Indeed, many cardinal characteristics can be expressed as the norm of a suitable relation, but a slight generalization is needed to handle certain others.  Let $\bm{A}$ be a relation, and let $\Phi$ be a property of families $\mathcal F\subset A_+$.  Then the norm of $\bm{A}$ \emph{relative to $\Phi$} is the cardinal characteristic
\[
\|\bm{A}\|_\Phi=\min\set{|\mathcal F|:\Phi(\mathcal F)\;\&\;\mathcal F\in D(\bm{A})}\;.
\]
For instance, the pseudo-intersection number $\frp$ is the norm of the relation $([\omega]^\omega,2^\omega, \not\subset^*\nolinebreak)$ relativized to the property $\Phi(\mathcal F)=$ ``$\mathcal F$ is centered.''

Cardinal characteristics that can be expressed as the relativized norm of a relation are said to be \emph{tame}.  (This terminology is usually reserved for the case when additional definability constraints are imposed on $\bm{A}$ and $\Phi$.  At present we shall not have need of any further precision, but see Appendix~B of \cite{zap} for a discussion.)

For our purposes, we consider only relations on spaces with an eventual equality relation. Specifically, a relation $\bm{A}$ is said to be \emph{invariant} if both $A_-$ and $A_+$ are subsets of either $2^\omega$ or $\omega^\omega$, and whenever $x\mathrel{E_0(A_-)}x'$ and $y\mathrel{E_0(A_+)}y'$, we have
\[
x\mathrel{A}y\iff x'\mathrel{A}y'\,.
\]
Now we are ready to define the combinatorial property corresponding to the cardinal characteristic $\|\bm A\|_\Phi$ as follows.


\begin{defn}
  Let $\bm{A}$ be an invariant relation, let $\Phi$ be a property of families $\mathcal{F}\subset A_+$, and suppose that $E$ is a countable Borel equivalence relation on the standard Borel space $X$. Then $E$ is said to be \emph{Borel non-$(\bm{A},\Phi)$} if for every Borel homomorphism $\phi\colon E\to E_\sset(A_+)$ such that the family $\set{\phi(x)(n):n\in\omega}$ has property $\Phi$ for each $x\in X$, there exists a Borel homomorphism $\psi\colon E\to E_0(A_-)$ such that for all $x\in X$ and $n\in\omega$,
\[
\neg(\psi(x)\mathrel{A}\phi(x)(n))\;.
\]
For the ``non-relativized'' case where $\Phi$ holds of all families, we just say \emph{Borel non-$\bm{A}$}.
\end{defn}


For example, the property corresponding to $\frd=\|(\omega^\omega,\omega^\omega,\leq^*)\|$ is the Borel non-dominating property, $\sfd$.  Similarly, the Borel pseudo-intersecting property $\mathsf{p}$ is the property corresponding to the relation $([\omega]^\omega,\mathcal{P}(\omega),\not\subset^*)$ together with $\Phi(\mathcal F)=$ ``$\mathcal F$ is centered.'' In
fact, one easily checks that each of the nine properties introduced in Definition~\ref{df:list} is derived from a tame cardinal characteristic. 

We now establish some basic closure results. Recall that if $E\subset F$ are countable Borel equivalence relations on the standard Borel space $X$, then a Borel set $B\subset X$ is \emph{full} for $E$ if $B$ intersects each $E$-class, and that $F$ is \emph{smooth over} $E$ if there is a Borel homomorphism $f\colon F\to E$ such that $x\mathrel{F}f(x)$ for all $x\in X$.

\begin{thm} \label{thm:closure}
Let $E$, $F$ be countable Borel equivalence relations on the standard Borel spaces $X$, $Y$, respectively, and suppose that $\bm{A}$ is an invariant relation, with $\Phi$ a property of families $\mathcal{F}\subset A_+$. 
\begin{enumerate}
  \item[(i)] If $E$ is Borel non-$(\bm{A},\Phi)$ and $B\subset X$ is Borel, then $E\mathbin{\upharpoonright}B$ is Borel non-$(\bm{A},\Phi)$.
  \item[(ii)] If $E\leq_BF$ and $F$ is Borel non-$(\bm{A},\Phi)$, then $E$ is Borel non-$(\bm{A},\Phi)$.
  \item[(iii)] If $B\subset X$ is a Borel subset that is full for $E$ and if $E\mathbin{\upharpoonright}B$ is Borel non-$(\bm{A},\Phi)$, then $E$ is Borel non-$(\bm{A},\Phi)$.
  \item[(iv)] If $X=Y$, $E\subset F$, $F$ is smooth over $E$, and $E$ is Borel non-$(\bm{A},\Phi)$, then $F$ is Borel non-$(\bm{A},\Phi)$.
  \item[(v)] If $X=Y$, $E\subset F$, $F$ is Borel non-$(\bm{A},\Phi)$, and the property $\Phi$ is closed under unions, then $E$ is Borel non-$(\bm{A},\Phi)$.
  \end{enumerate}
\end{thm}

\begin{proof}
  Throughout the proofs of (i)--(v) we fix a pairing function $n\mapsto\langle n_0,n_1\rangle$ of $\omega$ onto $\omega^2$.

(i) Fix $x_0\in B$ (if $B=\emptyset$ there is nothing to prove), and let $E$ be the orbit equivalence relation arising from the Borel action of the group $\Gamma=\set{\gamma_i:i\in\omega}$ on $X$, where $\gamma_0=\id$. Suppose that $\phi\colon E\mathbin{\upharpoonright}B\to E_\sset(A_+)$ is a Borel homomorphism such that for all $x\in B$, $\{\phi(x)(n):n\in\omega\}$ has property $\Phi$. Let $[B]=\set{x\in X:(\exists y\in B)\,x\mathrel{E}y}$ be the $E$-saturation of $B$, and define the function $\sigma\colon [B]\to B$ by
\[
\sigma(x)=\gamma_kx,\text{ where $k$ is least such that $\gamma_kx\in B$}\;.
\]
Define the function $\tilde{\phi}\colon X\to(A_+)^\omega$ by
\[
\tilde{\phi}(x)(n)= \begin{cases} \phi(\sigma(x))(n) & \text{if $x\in [B]$\;;} \\ \phi(x_0)(n) & \text{otherwise\;.} \end{cases}
\]
Then $\tilde{\phi}$ is a Borel homomorphism $E\to E_\sset(A_+)$ such that for all $x\in X$, $\{\tilde{\phi}(x)(n):n\in\omega\}$ has property $\Phi$. By hypothesis obtain a Borel homomorphism $\psi\colon E\to E_0(A_-)$ such that for all $x\in X$ and $n\in\omega$, $\neg(\psi(x)\mathrel{A}\tilde{\phi}(x)(n))$. The restriction of $\psi$ to $B$ is as desired.

(ii) Suppose that $E$ is the orbit equivalence relation arising from the Borel action of the group $\Gamma=\set{\gamma_i:i\in\omega}$ on $X$. Let $f\colon X\to Y$ be a Borel reduction from $E$ to $F$, and let   $\sigma\colon\im(f)\to X$ be a Borel function such that $f\circ\sigma=\id_{\im(f)}$. Let $\phi\colon E\to E_\sset(A_+)$ be a Borel homomorphism such that for all $x\in X$, $\{\phi(x)(n):n\in\omega\}$ has property $\Phi$. Define the function $\tilde{\phi}\colon \im(f)\to(A_+)^\omega$ by
\[
\tilde{\phi}(y)(n)=\phi(\gamma_{n_0}\sigma(y))(n_1)\;,
\]
so that $\tilde{\phi}$ is a Borel homomorphism $F\mathbin{\upharpoonright}\im(f)\to E_\sset(A_+)$ such that for all $y\in\im(f)$, $\{\tilde{\phi}(y)(n):n\in\omega\}$ has property $\Phi$. By hypothesis and using (i), obtain a Borel homomorphism $\tilde{\psi}\colon F\mathbin{\upharpoonright}\im(f)\to E_0(A_-)$ such that for all $y\in\im(f)$ and $n\in\omega$,
  \[
  \neg(\tilde{\psi}(y)\mathrel{A}\tilde{\phi}(y)(n))\;.
  \]
  Let $\psi=\tilde{\psi}\circ f$. Then $\psi$ is a Borel homomorphism $E\to E_0(A_-)$ such that for all $x\in X$ and $n\in\omega$, $\neg(\psi(x)\mathrel{A}\phi(x)(n))$.

(iii) Let $E$ be the orbit equivalence relation arising from the Borel action of the group $\Gamma=\set{\gamma_i:i\in\omega}$ on $X$, and suppose that $\phi\colon E\to E_\sset(A_+)$ is a Borel homomorphism such that for all $x\in X$, $\{\phi(x)(n):n\in\omega\}$ has property $\Phi$. Define the function $\tilde{\phi}\colon B\to(A_+)^\omega$ by
\[\tilde{\phi}(x)(n)=\phi(\gamma_{n_0}x)(n_1)\;,
\]
so that $\tilde{\phi}$ is a Borel homomorphism $E\mathbin{\restriction}B\to E_\sset(A_+)$ such that for all $x\in B$, $\{\phi(x)(n):n\in\omega\}$ has property $\Phi$. Obtain by hypothesis a Borel homomorphism $\tilde{\psi}\colon E\mathbin{\upharpoonright}B\to E_0(A_-)$ such that for all $x\in B$ and $n\in\omega$, $\neg(\tilde{\psi}(x)\mathrel{A}\tilde{\phi}(x)(n))$. Define the function $\psi\colon X\to A_-$ by
\[
\psi(x)(n)=\tilde{\psi}(\gamma_kx)(n),\text{ where $k$ is least such that $\gamma_kx\in B$}\;.
\]
Then $\psi$ is a Borel homomorphism $E\to E_0(A_-)$ such that for all $x\in X$ and $n\in\omega$, $\neg(\psi(x)\mathrel{A}\phi(x)(n))$.

(iv) Let $f\colon X\to X$ be a Borel homomorphism from $F$ to $E$ such that $x\mathrel{F}f(x)$ for all $x\in X$. Let $B=\{x\in X: x\mathrel{E}f(x)\}$, so that $B$ is a Borel subset of $X$ that is full for $F$ such that $E\mathbin{\upharpoonright}B=F\mathbin{\upharpoonright}B$. Now if $E$ is Borel non-$(\bm{A},\Phi)$ then also $E\mathbin{\upharpoonright}B$ is Borel non-$(\bm{A},\Phi)$ by (i), which implies that $F$ is Borel non-$(\bm{A},\Phi)$ by (iii).

(v) Let $F$ be the orbit equivalence relation arising from the Borel action of the group $\Gamma=\set{\gamma_i:i\in\omega}$ on $X$. Let $\phi\colon E\to E_\sset(A_+)$ be a Borel homomorphism such that for all $x\in X$, $\{\phi(x)(n):n\in\omega\}$ has property $\Phi$. For all $x\in X$ and $n\in\omega$, define
\[\tilde{\phi}(x)(n)=\phi(\gamma_{n_0}x)(n_1)\;,
\]
so that $\tilde{\phi}$ is a Borel homomorphism $F\to E_\sset(A_+)$ such that for all $x\in X$, the set
\[
\{\tilde{\phi}(x)(n):n\in\omega\} = \bigcup_{y\in [x]_F}\{\phi(y)(n):n\in\omega\}
\]
has property $\Phi$. Obtain by hypothesis a Borel homomorphism $\psi\colon F\to E_0(A_-)$ such that for all $x\in X$ and $n\in\omega$, $\neg(\psi(x)\mathrel{A}\tilde{\phi}(x)(n))$. Then $\psi$ is a Borel homomorphism from $E$ to $E_0(A_-)$ with the property that $\neg(\psi(x)\mathrel{A}\phi(x)(n))$ for all $x,n$.
\end{proof}

Next we discuss the implication relationships between combinatorial properties derived from cardinal characteristics, which frequently correspond to inequalities between the cardinals.  Many of the inequalities expressed in van~Douwen's diagram are captured by combinatorial gadgets called \emph{generalized Galois-Tukey connections}, or \emph{morphisms} for short.  If $\bm{A}$ and $\bm{B}$ are relations, then a morphism from $\bm{A}$ to $\bm{B}$ is a pair of functions
\begin{align*}
  \xi_-&\colon B_-\to A_-\\
  \xi_+&\colon A_+\to B_+
\end{align*}
such that for all $b\in B_-$ and $a\in A_+$, 
\[
\xi_-(b)\mathrel{A}a \ \implies \ b\mathrel{B}\xi_+(a)\;.
\]
The point of the definition is that if there exists a morphism from $\bm{A}$ to $\bm{B}$, then $\|\bm{A}\|\geq\|\bm{B}\|$ (indeed, if $\mathcal F$ is dominating with respect to $\bm{A}$ then $\xi_+(\mathcal F)$ is dominating with respect to $\bm{B}$). More generally, if there exists a morphism $(\xi_-,\xi_+)$ from $\bm{A}$ to $\bm{B}$ such that whenever $\mathcal F$ has property $\Phi$ then $\xi_+(\mathcal F)$ has property $\Psi$, then $\|\bm A\|_\Phi\geq\|\bm B\|_\Psi$.

%

\begin{example} \label{ex_pleqa}
To show that $\frp\leq\fra$, one may simply observe that whenever a $\set{a_\alpha}$ is maximal almost disjoint (and infinite), then the family $\set{a_\alpha^c}$ is centered and has no pseudo-intersection. To prove this with morphisms, let $\xi_-(b)=b$ and $\xi_+(a)=a^c$; then $\xi_+$ takes infinite almost disjoint families to families with the strong finite intersection property, and whenever $b$ is not almost disjoint from $a$, we have $b\not\subset^*a^c$.
\end{example}

Although the existence of a morphism from $\bm{A}$ to $\bm{B}$ implies that $\|\bm{A}\|\geq\|\bm{B}\|$, in the study of cardinal characteristics one is ultimately interested in determining \emph{which models} of set theory satisfy a given cardinal inequality.  Of special importance are those relationships between cardinal characteristics that are provable in ZFC.  In \cite{blass-borel}, Blass notes that even if there is a morphism from $\bm{A}$ to $\bm{B}$, the inequality $\|\bm{A}\|\geq\|\bm{B}\|$ may fail in a forcing extension (with $\bm{A}$ and $\bm{B}$ interperted in the extension).  In that paper Blass proposes that one look instead for \emph{definable} morphisms, and he establishes the following.

\begin{thm}[Blass] \label{thm:forcing}
  Suppose that there is a Borel morphism from $\bm{A}$ to $\bm{B}$, meaning that the components $\xi_-,\xi_+$ are Borel functions.  Then the inequality $\|\bm A\|\geq\|\bm B\|$ cannot be violated by forcing.
\end{thm}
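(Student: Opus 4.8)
The plan is to combine the absoluteness of Borel codes with the elementary Proposition above. The essential difficulty that the \emph{Borel}-ness hypothesis overcomes is this: the relations $\bm A$ and $\bm B$ are given by formulas, so in a forcing extension $V[G]$ the spaces $A_-,A_+,B_-,B_+$ typically acquire new elements, and an arbitrary set-theoretic morphism $(\xi_-,\xi_+)$ living in the ground model $V$ is simply undefined on these new points. A Borel morphism, by contrast, is presented by Borel codes, and a Borel code is a recipe that reinterprets to a genuine Borel function in every model of \ZFC\ that contains the code.

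First I would fix, in $V$, Borel codes $c_-,c_+$ for $\xi_-,\xi_+$, together with Borel codes for the relations $A,B$ and the subspaces $A_\pm,B_\pm$; all of these are reals lying in $V\subseteq V[G]$. In $V[G]$ these same codes reinterpret to Borel functions $\xi_-,\xi_+$ and Borel relations $A,B$ on the $V[G]$-versions of the spaces. The crux is to check that the defining clauses of a morphism are absolute between $V$ and $V[G]$. That each $c_\pm$ codes a total single-valued Borel function into the correct subspace is, over the codes, a $\Pi^1_2$ assertion, hence absolute by Shoenfield absoluteness. The morphism implication
\[
\forall b\in B_-\;\forall a\in A_+\;\bigl[\,\xi_-(b)\mathrel{A}a\;\implies\;b\mathrel{B}\xi_+(a)\,\bigr]
\]
has a Borel matrix preceded by two universal real quantifiers, so it is $\Pi^1_1$ and therefore absolute already by Mostowski absoluteness.

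Consequently, the hypothesis that $(\xi_-,\xi_+)$ is a Borel morphism from $\bm A$ to $\bm B$ in $V$ transfers, via these absoluteness principles, to the assertion that $(\xi_-,\xi_+)$ is a morphism from $\bm A$ to $\bm B$ as computed in $V[G]$. Since $V[G]\models\ZFC$, the Proposition applies there to yield $\norm{\bm A}^{V[G]}\geq\norm{\bm B}^{V[G]}$. As $V[G]$ was an arbitrary forcing extension, the inequality $\norm{\bm A}\geq\norm{\bm B}$ holds after any forcing and so cannot be violated.

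The step I expect to be the main obstacle is the complexity bookkeeping in the absoluteness argument: one must confirm that the invariance and Borelness hypotheses on $\bm A,\bm B$ really do place the clause that $c_\pm$ codes a total function with the prescribed domain and range at the $\Pi^1_2$ level, and the morphism implication at the $\Pi^1_1$ level, so that Shoenfield (indeed Mostowski) absoluteness applies. Should the relations $A,B$ only be analytic rather than Borel, the morphism implication rises to $\Pi^1_2$, but Shoenfield absoluteness still suffices and the argument is unchanged.
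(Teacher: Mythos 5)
Your proposal is correct, but note that the paper itself gives no proof of this theorem: it is quoted from Blass \cite{blass-borel} as a known result. Your argument --- fixing Borel codes for $\xi_-,\xi_+$ (and for the relations), observing that ``$(\xi_-,\xi_+)$ is a morphism'' is $\Pi^1_1$/$\Pi^1_2$ over those codes and hence persists to $V[G]$ by Mostowski--Shoenfield absoluteness, and then applying the morphism-implies-inequality proposition inside $V[G]$ --- is exactly the standard argument from Blass's paper, so it matches the intended proof rather than offering an alternative route.
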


Borel morphisms have been studied in \cite{blass-borel}, \cite{mildenberger} and \cite{tukey}. The latter article provides a diagram of all the characteristics in Figure~\ref{fig_vand} with respect to Borel morphisms. But for our purposes, even Borel morphisms are insufficient since the existence of such a morphism from $\bm{A}$ to $\bm{B}$ does not yield a proof that Borel non-$\bm{B}$ implies Borel non-$\bm{A}$.  Moreover, it is sometimes the case that Borel non-$\bm{B}$ implies Borel non-$\bm{A}$, and yet there is no morphism from $\bm{A}$ to $\bm{B}$ at all.  For instance, we show in the next section that $\mathsf{s}\rightarrow\mathsf{r}$, even though there are models in which $\mathfrak{s}<\mathfrak{r}$.  What we need instead is the following.

\begin{defn}
  Let $\bm{A},\bm{B}$ be invariant relations.  A Borel morphism $(\xi_-,\xi_+)$ from $\bm{A}$ to $\bm{B}$ is said to be \emph{invariant} iff $\xi_-$ is a Borel homomorphism $E_0(B_-)\to E_0(A_-)$.
\end{defn}

\begin{thm}\label{thm:AbstractNonsense}
  If there exists an invariant Borel morphism from $\bm{A}$ to $\bm{B}$, then the property Borel non-$\bm{B}$ implies Borel
  non-$\bm{A}$.
\end{thm}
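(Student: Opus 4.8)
The plan is to lift the proof of the norm inequality $\norm{\bm{A}}\geq\norm{\bm{B}}$ to the level of equivalence relations, composing the supplied Borel witnesses with the two components of the morphism. Fix an invariant Borel morphism $(\xi_-,\xi_+)$ from $\bm{A}$ to $\bm{B}$, and let $E$ be a countable Borel equivalence relation on $X$ that is Borel non-$\bm{B}$. To show that $E$ is Borel non-$\bm{A}$, I would begin with an arbitrary Borel homomorphism $\phi\from X\into (A_+)^\omega$ from $E$ to $E_\sset(A_+)$ and manufacture the required $\psi$.

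First I would push $\phi$ forward along $\xi_+$, defining $\tilde\phi\from X\into(B_+)^\omega$ by $\tilde\phi(x)(n)\defeq\xi_+(\phi(x)(n))$. Since $\xi_+$ is Borel, so is $\tilde\phi$; and since $x\mathrel{E}y$ forces $\set{\phi(x)(n):n\in\omega}=\set{\phi(y)(n):n\in\omega}$ as sets, applying $\xi_+$ pointwise preserves this equality, so $\tilde\phi$ is a Borel homomorphism from $E$ to $E_\sset(B_+)$. Note that no invariance of $\xi_+$ is needed here, precisely because $E_\sset$ tracks genuine set equality rather than almost-equality. Invoking the hypothesis that $E$ is Borel non-$\bm{B}$, I obtain a Borel homomorphism $\tilde\psi\from X\into B_-$ from $E$ to $E_0(B_-)$ with $\neg(\tilde\psi(x)\mathrel{B}\tilde\phi(x)(n))$ for all $x$ and $n$.

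Next I would pull $\tilde\psi$ back along $\xi_-$, setting $\psi\defeq\xi_-\circ\tilde\psi$. The defining property of the morphism, applied with $b=\tilde\psi(x)\in B_-$ and $a=\phi(x)(n)\in A_+$, says that $\xi_-(\tilde\psi(x))\mathrel{A}\phi(x)(n)$ implies $\tilde\psi(x)\mathrel{B}\xi_+(\phi(x)(n))$; taking the contrapositive and recalling that $\tilde\phi(x)(n)=\xi_+(\phi(x)(n))$ yields exactly $\neg(\psi(x)\mathrel{A}\phi(x)(n))$ for all $x$ and $n$, which is the domination condition required of a non-$\bm{A}$ witness. As a composite of Borel maps, $\psi$ is Borel.

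The one step deserving care---and the sole reason the morphism must be \emph{invariant}---is the verification that $\psi$ is a homomorphism from $E$ to $E_0(A_-)$. If $x\mathrel{E}y$, then $\tilde\psi(x)\mathrel{E_0(B_-)}\tilde\psi(y)$ because $\tilde\psi$ is a homomorphism into $E_0(B_-)$; applying the hypothesis that $\xi_-$ is a Borel homomorphism from $E_0(B_-)$ to $E_0(A_-)$ then delivers $\psi(x)\mathrel{E_0(A_-)}\psi(y)$. For a merely Borel morphism this is exactly where the argument would break down, since $\xi_-$ need not respect almost-equality; the invariance clause is engineered precisely to repair this. With this in hand the construction is complete, and the same template would handle a tame version verbatim, provided one additionally assumes that $\xi_+$ carries $\Phi$-families to $\Psi$-families so that $\set{\tilde\phi(x)(n):n\in\omega}$ inherits property $\Psi$.
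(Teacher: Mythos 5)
Your proof is correct and follows essentially the same route as the paper's: push $\phi$ forward along $\xi_+$, invoke the Borel non-$\bm{B}$ hypothesis, pull the witness back along $\xi_-$, and use the contrapositive of the morphism property. You in fact spell out one step the paper leaves implicit---that the invariance of $\xi_-$ is exactly what makes $\psi=\xi_-\circ\tilde\psi$ a homomorphism from $E$ to $E_0(A_-)$---and your closing remark on the tame generalization matches the paper's own comment following the proof.
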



\begin{proof}
Let $(\xi_-,\xi_+)$ be the invariant Borel morphism from $\bm{A}$ to $\bm{B}$, and suppose that $E$ is Borel non-$\bm{B}$.  Given a Borel homomorphism $\phi\colon E\to E_\sset(A_+)$, we shall consider the homomorphism $\phi'\colon E\to E_\sset(B_+)$ defined by
\[\phi'(x)(n)=\xi_+(\phi(x)(n))\;.
\]
Since $E$ is Borel non-$\bm{B}$, there exists a Borel homomorphism $\psi'\colon E\to E_0(B_-)$ such that for all $x\in X$ and $n\in\omega$ we have $\neg(\psi'(x)\mathrel{B}\phi'(x)(n))$. Letting $\psi=\xi_-\circ\psi'$, we have that $\psi$ is a Borel homomorphism $E\to E_0(A_-)$ and for all $x\in X,n\in\omega$,
\begin{align*}
  \neg\left(\psi'(x)\mathrel{B}\phi'(x)(n)\right)
  & \implies \neg\left(\psi'(x)\mathrel{B}\xi_+(\phi(x)(n))\right) \\
  & \implies \neg\left(\xi_-(\psi'(x))\mathrel{A}\phi(x)(n)\right) \\
  & \implies \neg\left(\psi(x)\mathrel{A}\phi(x)(n)\right)\;.
\end{align*}
Hence $E$ is Borel non-$\bm{A}$.
\end{proof}

Theorem~\ref{thm:AbstractNonsense} generalizes to properties derived from arbitrary tame cardinal characteristics.  To show that property Borel non-$(\bm{B},\Psi)$ implies Borel non-$(\bm{A},\Phi)$, it suffices to find an invariant Borel morphism $(\xi_-,\xi_+)$ from $\bm{A}$ to $\bm{B}$ such that $\xi_+(\mathcal F)$ has property $\Psi$ whenever $\mathcal F$ has property $\Phi$.

We now conclude this section with a generalization of a result of Thomas which shows that under the hypothesis of Martin's Conjecture, there exists a countable Borel equivalence relation which is \emph{not} Borel bounded.  Recall that $\leq_T$ denotes the Turing reducibility relation on $2^\omega$.  The \emph{Turing equivalence relation} $\equiv_T$ defined by $x\equiv_Ty$ iff
$x\leq_Ty$ and $y\leq_Tx$ is one of the most important countable Borel equivalence relations.

Martin's Conjecture is the statement that any Borel homomorphism from $\equiv_T$ to $\equiv_T$ is either constant or $\leq_T$-increasing on a cone.  (Here, a subset of $2^\omega$ is said to be a \emph{cone} if it is $\leq_T$-upwards closed).  We shall require only the following consequence of Martin's Conjecture (for instance, see Theorem~2.1(i) in \cite{mc}):

\begin{lem}\label{lem:MC}
Assuming Martin's conjecture, if $f\colon\mathord{\equiv}_T\to E_0$ is a Borel homomorphism then there exists a cone $C$ such that $f(C)$ is contained in a single $E_0$-class.
\end{lem}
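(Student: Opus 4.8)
The plan is to pass to \emph{Martin measure} and reduce the problem to a single application of Martin's Conjecture at the level of Turing degrees, followed by a $\sigma$-completeness argument. Recall the standard fact (Martin's cone theorem, a consequence of Borel determinacy) that the cones generate a $\sigma$-complete ultrafilter $\mu$ on the $\equiv_T$-invariant Borel subsets of $2^\omega$: for each such set $A$, exactly one of $A$ and its complement contains a cone, and if a cone is covered by countably many invariant Borel sets then one of them already contains a cone. For each $z\in 2^\omega$ put $P_z=\{x:f(x)=^*z\}$. Since $f$ is a homomorphism from $\equiv_T$ to $E_0$, the value $f(x)$ is determined up to $=^*$ by $[x]_{\equiv_T}$, so each $P_z$ is $\equiv_T$-invariant and depends only on $[z]_{E_0}$. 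Thus it suffices to produce a $z$ with $P_z\in\mu$, for then any witnessing cone $C\subseteq P_z$ satisfies $f(C)\subseteq[z]_{E_0}$.

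First I would apply Martin's Conjecture. Because $=^*\ \subseteq\ \equiv_T$ on $2^\omega$ (a finite perturbation is computable information), the hypothesis that $f$ is a homomorphism from $\equiv_T$ to $E_0$ implies that $f$ is $\equiv_T$-invariant in the sense required by the statement of Martin's Conjecture, i.e.\ $x\equiv_T y$ implies $f(x)\equiv_T f(y)$. Hence, on a cone, $f$ is either constant (there is a $z_0$ with $f(x)\equiv_T z_0$ throughout the cone) or increasing ($f(x)\geq_T x$ on a cone). The hard part will be ruling out the increasing alternative: this is exactly the point at which the full strength of Martin's Conjecture is needed, and it is the substance of Theorem~2.1(i) of \cite{mc}. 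The intuition is that the homomorphism condition is extremely rigid --- it forces all of $f([x]_{\equiv_T})$ into a single $E_0$-class, hence into finitely-many-bit perturbations of one real --- whereas an increasing $f$ would have to raise Turing complexity along the cone. Reconciling these should be impossible, but I do not expect a soft counting argument to suffice, since each $E_0$-class can legitimately absorb the countably many degrees lying below it. I would therefore invoke the cited uniform form of Martin's Conjecture to conclude that we are in the constant case.

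It then remains to upgrade ``constant on a cone modulo $\equiv_T$'' to ``constant on a cone modulo $E_0$,'' and here the $\sigma$-completeness of $\mu$ does the work. In the constant case there is a cone $C_0$ with $f(C_0)\subseteq[z_0]_{\equiv_T}$. But a single Turing degree is a \emph{countable} set of reals, so $f(C_0)$ meets only countably many $E_0$-classes, say $\{[w_i]_{E_0}:i\in\omega\}$. Consequently $C_0=\bigcup_i(C_0\cap P_{w_i})$ expresses a cone as a countable union of $\equiv_T$-invariant Borel sets, and by $\sigma$-completeness some $P_{w_i}$ contains a cone $C$. This $C$ satisfies $f(C)\subseteq[w_i]_{E_0}$, completing the proof. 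The only genuine obstacle is the middle step; the first and last steps are routine once Martin measure is in hand.
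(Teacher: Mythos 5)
You have the right skeleton, and it is essentially the skeleton of the real argument; your first and third steps --- the reduction to producing a single set $P_z$ in the Martin ultrafilter, and the $\sigma$-completeness upgrade from ``constant modulo $\equiv_T$'' to ``constant modulo $E_0$'' --- are correct and complete. For calibration, note that the paper offers no proof of this lemma at all: it is quoted directly as Theorem~2.1(i) of \cite{mc}. The trouble is your middle step, which is where the entire content lies, and your way of handling it is circular: the result you propose to invoke in order to rule out the increasing alternative, ``the cited uniform form of Martin's Conjecture,'' i.e.\ Theorem~2.1(i) of \cite{mc}, \emph{is} the statement being proved. Nor is there a genuine ``uniform form'' of Martin's Conjecture that applies here: the Slaman--Steel theorem on uniformly degree-invariant functions requires uniformity data that an arbitrary Borel homomorphism to $E_0$ does not carry. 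So as written, your proposal establishes the two routine steps and assumes the nontrivial one.

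The missing ingredient is not any form of Martin's Conjecture but an unconditional theorem. Suppose $f(x)\geq_T x$ for all $x$ in a cone $C$, and define $x\mathrel{E_f}y$ iff $f(x)=^*f(y)$. If $y\in C$ and $y\mathrel{E_f}x$, then $y\leq_T f(y)\equiv_T f(x)$, so each $E_f$-class in $C$ is contained in the countable set $\{y:y\leq_T f(x)\}$; hence $E_f\mathbin{\upharpoonright}C$ is a countable Borel equivalence relation. It contains $\equiv_T\mathbin{\upharpoonright}C$ (because $f$ is a homomorphism from $\equiv_T$ to $E_0$), and $f$ is by definition a Borel reduction of $E_f$ to $E_0$, so $E_f\mathbin{\upharpoonright}C$ is hyperfinite. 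Since a Borel subequivalence relation of a hyperfinite relation is hyperfinite (intersect it with the witnessing finite relations), $\equiv_T\mathbin{\upharpoonright}C$ would itself be hyperfinite. This contradicts the theorem of Slaman and Steel that Turing equivalence is not hyperfinite --- a ZFC result proved via Martin's cone theorem, whose proof localizes to any cone. This is exactly the leverage your counting intuition correctly sensed was needed: cardinality alone cannot yield a contradiction, since each $E_0$-class may absorb the countably many degrees below it, but hyperfiniteness can. With this argument inserted, your three-step outline becomes a complete and correct proof.
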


Using this, Thomas proved in Theorem~5.2 of \cite{mc} that Martin's Conjecture implies that $\equiv_T$ is not Borel bounded.  We now show that his argument applies to \emph{any} (nontrivial) property corresponding to an invariant relation.

\begin{thm}
Let $\bm{A}$ be an invariant relation, and assume that for all $z\in A_-$ there exists $a\in A_+$ such that $z\mathrel{A}a$.  Assuming Martin's Conjecture, the Turing equivalence relation $\equiv_T$ is not Borel non-$\bm{A}$.
\end{thm}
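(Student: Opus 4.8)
The plan is to follow Thomas's original strategy for the specific case $\bm{A} = (\omega^\omega,\omega^\omega,\leq^*)$ and abstract it to work for an arbitrary invariant relation $\bm{A}$ satisfying the nontriviality hypothesis. I would argue by contradiction: suppose that $\oequiv_T$ is Borel non-$\bm{A}$. The first step is to construct a suitable Borel homomorphism $\phi\from 2^\omega\into (A_+)^\omega$ from $\oequiv_T$ to $E_\sset(A_+)$ which serves as a genuine ``challenge.'' Writing $\oequiv_T = E_\Gamma^{2^\omega}$ for a countable group $\Gamma = \set{\gamma_i:i\in\omega}$ via Feldman-Moore, the natural candidate is to fix a Borel map $h\from A_-\into A_+$ witnessing the nontriviality assumption (so $z\mathrel{A}h(z)$ for all $z$; such a Borel selector exists by Lusin-Novikov applied to the relation $A$, or simply by uniformization), fix a Borel bijection $\theta\from 2^\omega\into A_-$, and set $\phi(x)(n)\defeq h(\theta(\gamma_nx))$. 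This makes $\set{\phi(x)(n):n\in\omega}$ an enumeration of a set depending only on $[x]_{\oequiv_T}$, so $\phi$ is a homomorphism to $E_\sset(A_+)$.

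\medskip
\noindent\textbf{Applying the Borel non-$\bm{A}$ hypothesis.}
By the assumption that $\oequiv_T$ is Borel non-$\bm{A}$, we obtain a Borel homomorphism $\psi\from 2^\omega\into A_-$ from $\oequiv_T$ to $E_0(A_-)$ such that $\neg(\psi(x)\mathrel{A}\phi(x)(n))$ for all $x$ and all $n$. Composing with the Borel bijection $A_-\into 2^\omega$, we may regard $\psi$ as a Borel homomorphism from $\oequiv_T$ to $E_0$ in the sense of Lemma~\ref{lem:MC}. Here is where Martin's Conjecture enters: by Lemma~\ref{lem:MC}, there is a cone $C$ on which $\psi(C)$ lands in a single $E_0(A_-)$-class; fix $x_0\in C$ and let $z_0 = \psi(x_0)$, so that $\psi(x)\mathrel{E_0(A_-)}z_0$ for every $x\in C$.

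\medskip
\noindent\textbf{Deriving the contradiction.}
The idea now is to defeat $\psi$ using the freedom we still have inside the cone. By invariance of $\bm{A}$, the property $\neg(\psi(x)\mathrel{A}\phi(x)(n))$ depends on $\psi(x)$ only through its $E_0(A_-)$-class, which is constantly $[z_0]$ on $C$; so for every $x\in C$ and every $n$ we have $\neg(z_0\mathrel{A}\phi(x)(n))$. But by nontriviality there exists $a_0\in A_+$ with $z_0\mathrel{A}a_0$. The contradiction will come from exhibiting some $x\in C$ and some $n$ with $\phi(x)(n)\mathrel{E_0(A_+)}a_0$, since invariance would then force $z_0\mathrel{A}\phi(x)(n)$. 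To arrange this I would choose the witnessing map $h$ and the coding $\theta$ carefully at the outset so that the challenge family $\set{\phi(x)(n)}$ is guaranteed to hit (up to $E_0(A_+)$) a response dominating $z_0$; concretely, one engineers $\phi$ so that for a cofinal set of $x\in C$ some $\gamma_nx$ codes (via $\theta$) a challenge whose $h$-response is $E_0(A_+)$-equivalent to $a_0$. \textbf{The main obstacle} is precisely this last coordination step: the cone $C$ is only $\leq_T$-upward closed and need not interact transparently with the group action $\Gamma$ or the coding $\theta$, so one must verify that $C$ meets an orbit containing a point whose designated response beats $z_0$. I expect this to require using that $C$, being a cone, is cofinal in the Turing degrees and that the $\oequiv_T$-saturation of $C$ is all of $2^\omega$, together with a judicious choice of $h$ that outputs a fixed $a_0$-representative on a $\leq_T$-cofinal Borel set of inputs.
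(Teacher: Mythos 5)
Your proof has the right skeleton --- argue by contradiction, build a challenge homomorphism from the Feldman--Moore generators, invoke Lemma~\ref{lem:MC} to get a cone $C$ on which $\psi$ lands in a single $E_0(A_-)$-class $[z_0]$, then use invariance plus nontriviality --- and your final-step logic (if some $\phi(x)(n)$ is $E_0(A_+)$-equivalent to a response $a_0$ dominating $z_0$, invariance forces a contradiction) is sound. But the step you yourself flag as ``the main obstacle'' is a genuine gap, and your suggested repair is circular. With $\phi(x)(n)=h(\theta(\gamma_nx))$, the response family offered to a class $[x]_{\oequiv_T}$ is just $h(\theta([x]_{\oequiv_T}))$, the image of one countable orbit. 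Taking $a_0=h(z_0)$, the natural witness you need is a point of $C$ whose orbit contains $\theta^{-1}(z_0)$ (or some $\theta^{-1}(z')$ with $h(z')\mathrel{E_0(A_+)}h(z_0)$); but $[\theta^{-1}(z_0)]_{\oequiv_T}$ is a single countable set, fixed once $\psi$ is handed to you, while the base of $C$ may have arbitrarily large Turing degree, so nothing makes them meet. Choosing $h$ ``judiciously'' for $a_0$ in advance is impossible for the same timing reason: $a_0$ depends on $z_0$, which depends on $\psi$, which the hypothesis produces only \emph{after} $\phi$ is fixed. What is actually needed is that \emph{every} potential response $a\in A_+$ is issued on a set of points whose $\oequiv_T$-saturation contains a cone. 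A secondary flaw: your Borel selector $h$ with $z\mathrel{A}h(z)$ is not justified --- Lusin--Novikov requires countable sections, which the relation $A$ need not have (for $\leq^*$ the sections are uncountable), and general Borel uniformization yields only universally measurable selectors.

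The paper's construction resolves exactly this coordination problem, and in a way that also eliminates $h$. Its challenge is single-valued: fix a pairing $x\mapsto\seq{x_0,x_1}$ on $2^\omega$ and a Borel bijection $f\from 2^\omega\into A_+$, and set $\phi(x)\defeq f(x_0)$ (this converts to an $E_\sset$-challenge by composing with the group action, as in your setup). The point of the pairing trick is that for every $a\in A_+$ and every $w\geq_T f^{-1}(a)$, the point $\seq{f^{-1}(a),w}$ is Turing equivalent to $w$ and lies in $\phi^{-1}(a)$; hence $[\phi^{-1}(a)]_{\oequiv_T}$ contains the cone above $f^{-1}(a)$, for \emph{every} $a$ simultaneously. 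Now whatever cone $C$ and class $[z_0]$ Martin's Conjecture produces, one picks any $a$ with $z_0\mathrel{A}a$ --- a pointwise use of nontriviality, no Borel selection needed --- intersects the two cones to find $x\in C$ with some $y\equiv_Tx$ satisfying $\phi(y)=a$, notes $y\in C$ as well, and concludes $\psi(y)\mathrel{A}\phi(y)$ by invariance, a contradiction. This ``cofinal issuing of every challenge,'' decoupling the challenge from the degree of the orbit, is the one idea your proposal is missing.
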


\begin{proof}
  Suppose towards a contradiction that $\equiv_T$ is Borel non-$\bm{A}$.  Let $\phi\colon 2^\omega\rightarrow A_+$ be any Borel
  function such that for all $a\in A_+$, the preimage $\phi^{-1}(a)$ is cofinal.  (To see that there exists such a map, let  $x\mapsto\seq{x_0,x_1}$ be a pairing function on $2^\omega$ and let $f$ be a Borel bijection between $2^\omega$ and $A_+$.  Then $\phi(x)= f(x_0)$ has the desired properties.)

It follows that for all $a\in A_+$, the $\equiv_T$-saturation $[\phi^{-1}(a)]_T$ contains a cone.  Since $\equiv_T$ is Borel non-$\bm{A}$, there exists a Borel homomorphism $\psi\colon E\to E_0(A_-)$ such that for all $x\in 2^\omega$,
  \[
  \neg(\psi(x)\mathrel{A}\phi(x))\;.
  \]
  By Lemma~\ref{lem:MC}, there exists a cone $C$ such that $\psi(C)$ is contained in a single $E_0(A_-)$-class, say $[z]_{E_0}$.  Since $\bm{A}$ is nontrivial, there exists $a\in A_+$ such that $z\mathrel{A}a$.  Since $[\phi^{-1}(a)]_T$ contains a cone, it meets $C$.  In particular, there exists $x\in 2^\omega$ such that $\psi(x)\mathrel{A}\phi(x)$, which is a contradiction.
\end{proof}

\section{The non-splitting property}

The non-splitting property $\sfs$ would appear to be very special, as it does not hold of any non-trivial equivalence relations.

\begin{thm} \label{thm_splitting}
  If the countable Borel equivalence relation $E$ has property $\sfs$, then $E$ is smooth.
\end{thm}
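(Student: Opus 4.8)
The plan is to prove the contrapositive: if $E$ is not smooth, then $E$ does not have property $\sfs$. Since property $\sfs$ is closed downward under Borel reducibility by Theorem~\ref{thm:closure}(ii), and since the Glimm--Effros dichotomy gives $E_0\sqsubseteq_B E$ (hence $E_0\leq_B E$) whenever $E$ is non-smooth, it suffices to show that $E_0$ \emph{itself} does not have property $\sfs$: for if $E$ had $\sfs$, closure would force $E_0$ to have $\sfs$ as well. This reduction is routine; the entire content of the theorem is concentrated in showing that $E_0$ fails $\sfs$.

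To attack that core statement, I would first manufacture a genuinely ``spread out'' invariant family. Realize $E_0$ on $2^\omega$ as the orbit equivalence relation of $\Gamma=\set{\gamma_n:n\in\omega}$ acting by finite bit-flips, fix an identification $2^{<\omega}\cong\omega$, and consider the branch map $R\from 2^\omega\into[\omega]^\omega$, $R(y)=\set{y\restriction k:k\in\omega}$. Setting $\phi(x)(n)\defeq R(\gamma_n x)$ produces a Borel homomorphism from $E_0$ to $E_\sset$, since $\set{R(y):y\in[x]_{E_0}}$ depends only on the class $[x]_{E_0}$. The key feature is that distinct branches are almost disjoint, so \emph{unlike} the naive family $[x]_{E_0}$ (whose members are pairwise $=^*$-equal and therefore vacuous under $\subset^*$), the members of this family occupy distinct $=^*$-classes, and the non-splitting condition becomes nontrivial. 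One can further augment $\phi$ with the fixed family of subtree sets $R_s=\set{t\in 2^{<\omega}:t\supseteq s}$ in order to push any witness toward branch-selecting behavior.

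Given a purported witness $\psi$, the argument should split according to how $\psi(x)$ sits relative to the family. In the \emph{selection} case, where for each class $\psi(x)\subset^*R(\gamma_n x)$ for some $n$, almost-disjointness makes that branch unique and quasi-invariance of $\psi$ makes the choice $E_0$-invariant; this yields a Borel $E_0$-invariant map $x\mapsto y(x)\in[x]_{E_0}$, i.e.\ a Borel transversal, contradicting non-smoothness outright. The remaining \emph{dodging} case, where $\psi(x)$ is almost disjoint from every class branch, must be excluded by a Baire-category argument in the spirit of the converse half of Proposition~\ref{prop:invar}: pass to a comeager set $D$ on which $\psi$ is continuous, form the invariant comeager saturation $\hat D=\bigcap_{\gamma\in\Gamma}\gamma D$, and use continuity together with the density of each $E_0$-class to show that the coherent sequence of side-decisions recorded by $\psi$ cannot be sustained on $\hat D$ without collapsing back to a selection.

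The step I expect to be the main obstacle is precisely ruling out the dodging case, that is, reconciling the quasi-invariance of $\psi$ with genericity. The difficulty here is structural rather than incidental: because $E_0$ is hyperfinite it \emph{does} have property $\sfb$, so no argument resting solely on generic ergodicity can work, since such an argument would apply verbatim to the bounding property and prove a falsehood. The proof must therefore genuinely exploit the splitting combinatorics---the observation following Proposition~\ref{prop:Suncountable} that producing a non-split set amounts to an ultrafilter-like sequence of coherent binary choices---and show that Borel continuity forces these choices either to pin down a single representative of each class (smoothness) or to become incoherent. Controlling this dichotomy, and in particular forbidding a ``diagonal'' witness that meets every class branch only finitely, is the crux on which the theorem turns.
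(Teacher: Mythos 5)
Your reduction step is fine (it matches the paper's own use of Theorem~\ref{thm:closure} and Glimm--Effros: the paper likewise reduces to exhibiting one nonsmooth hyperfinite relation failing $\sfs$), and your selection-case analysis is correct: almost-disjointness of branches plus quasi-invariance of $\psi$ would indeed produce a Borel transversal for $E_0$. The fatal problem is the dodging case, and it is not merely an unresolved technicality --- for your family $\phi$ it genuinely occurs, so no argument, Baire-category or otherwise, can exclude it. Concretely, under the identification $2^{<\omega}\cong\omega$, let $A\defeq\set{0^n1:n\in\omega}$, an infinite antichain in the tree $2^{<\omega}$. Since a branch is a chain, $\abs{A\cap R(y)}\leq 1$ for every $y\in 2^\omega$, so the \emph{constant} map $\psi(x)\defeq A$ is a Borel homomorphism from $E_0$ to $=^*$ with $\psi(x)\subset^*R(\gamma_nx)^c$ for all $x$ and $n$. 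The subtree augmentation does not repair this: for $s=0^m$ one has $A\subset^*R_s$, while for any $s$ containing a $1$ the set $A\cap R_s$ has at most one element, so $A\subset^*R_s^c$; thus $A$ is a legitimate response to the augmented family as well. (What the subtrees do force is an $E_0$-invariant Borel assignment of a branch $y_x$, namely the one determined by $\set{s:\psi(x)\subset^*R_s}$ --- but nothing makes $y_x$ lie in $[x]_{E_0}$; the constant witness selects the all-zeros branch for every $x$, and an invariant map that is constant is no contradiction.) So your $\phi$ simply does not witness failure of $\sfs$, and the theorem cannot be extracted from it.

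The paper's proof avoids exactly this trap by choosing a family in which dodging is combinatorially impossible. It works on $X=[\omega]^\omega$ with the hyperfinite relation $E$ generated by $E_0$ together with complementation, $x\mathrel{E}x'$ iff $x\mathrel{E_0}x'$ or $x^c\mathrel{E_0}x'$, and takes $\phi(x)$ to enumerate the class $[x]_E$ itself, viewed as subsets of $\omega$ rather than as branches. A witness $\psi$ must then satisfy $\psi(x)\subset^*x$ or $\psi(x)\subset^*x^c$, and since $\psi(x)$ is infinite these alternatives are mutually exclusive: the family contains the complementary pair $x,x^c$, so there is no antichain-like third option. The resulting side-choice $N\defeq\set{x:\psi(x)\subset^*x}$ is Borel, $E_0$-invariant, and contains exactly one of each pair $\set{x,x^c}$; by Proposition~\ref{prop_nonmeas} (ergodicity of the bit-flip action together with the fact that complementation is measure-preserving) no such set can be measurable, a contradiction. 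Note also that the decisive tool is measure-theoretic ergodicity applied to this forced binary choice, not category applied to a generic witness; if you want to salvage your outline, what is needed is not a sharper category argument but a different family --- one whose members, like the pairs $\set{x,x^c}$, cannot all be simultaneously dodged by a single infinite set.
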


For the proof we shall require the following standard measure-theoretic fact.  Let $m$ denote the Haar (or ``coin-tossing'') measure on $2^\omega$, so that $m$ is the $\omega$-fold product of the $(\frac12,\frac12)$ measure on $\set{0,1}$. Then $[\omega]^\omega$ is an $E_0$-invariant, Borel, $m$-conull subset of $2^\omega$, and we denote the restriction of $m$ to $[\omega]^\omega$ also by $m$.

\begin{prop} \label{prop_nonmeas}
  Suppose that $N\subset [\omega]^\omega$ has the following properties:
  \begin{enumerate}
  \item $N$ is $E_0$-invariant;
  \item for any $x\in [\omega]^\omega$, exactly one of $x$ or $x^c$ is in $N$.
  \end{enumerate}
  Then $N$ is not $m$-measurable.
\end{prop}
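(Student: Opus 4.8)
The plan is to argue by contradiction: assuming $N$ is $m$-measurable, I would combine two facts about $m$ to force $m(N)=\frac12$ on the one hand and $m(N)\in\{0,1\}$ on the other. The first fact is the ergodicity of $E_0$ with respect to the coin-tossing measure (equivalently, Kolmogorov's zero-one law), and the second is a measure-preserving symmetry of $[\omega]^\omega$ provided by complementation.

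First I would observe that $E_0$-invariance is precisely invariance under flipping finitely many coordinates. Hence if $N$ is $E_0$-invariant, then $\mathbf{1}_N$ is unchanged by flipping any single coordinate $k$, which means $\mathbf{1}_N$ does not depend on $x(k)$; applying this to all $k<n$ shows that $\mathbf{1}_N$ is $\sigma(X_n,X_{n+1},\dots)$-measurable for every $n$, so $N$ lies in the tail $\sigma$-algebra. Since the coordinates $X_n$ are $m$-independent, Kolmogorov's zero-one law yields $m(N)\in\{0,1\}$.

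Next I would bring in the complementation map $c\colon x\mapsto x^c$, i.e.\ the map flipping every coordinate. Because each factor of $m$ is the symmetric $(\frac12,\frac12)$ measure, $c$ preserves $m$. The set of $x$ that are finite or cofinite is countable and hence $m$-null, so off a null set $c$ restricts to an $m$-preserving involution of $[\omega]^\omega$. Now the hypothesis that for ($m$-almost) every $x$ exactly one of $x$ and $x^c$ lies in $N$ gives $x\in c(N)\iff x^c\in N\iff x\notin N$, so that $c(N)=N^c$ modulo a null set. Measure-preservation then yields
\[
  m(N)=m(c(N))=m(N^c)=1-m(N),
\]
whence $m(N)=\frac12$, contradicting $m(N)\in\{0,1\}$.

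I expect the only genuinely substantive step to be the ergodicity/zero-one law input; the rest is bookkeeping. The one mild technical point to handle with care is that complementation is only a well-defined map $[\omega]^\omega\to[\omega]^\omega$ after discarding the null set of cofinite sets, and correspondingly the ``exactly one of $x$ or $x^c$'' hypothesis should be read as holding $m$-almost everywhere; since this exceptional set is $m$-null it has no effect on the computation of $m(N)$.
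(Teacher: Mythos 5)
Your proof is correct and takes essentially the same approach as the paper's: both deduce $m(N)\in\{0,1\}$ from $E_0$-invariance and $m(N)=\tfrac12$ from the fact that the complementation map $x\mapsto x^c$ is measure-preserving and (modulo a null set) carries $N$ onto its complement. The only difference is that where the paper cites ergodicity of the coordinate-flipping group action as well known, you prove the needed zero-one alternative directly via tail-measurability and Kolmogorov's zero-one law, which is just the standard proof of that ergodicity.
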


\begin{proof}
Suppose that $N$ is measurable. Property (a) says that $N$ is a tail event, so by Kolmogorov's zero-one law $N$ has measure zero or one. On the other hand, property (b) implies that $N$ has measure $\frac12$, since the map $x\mapsto x^c$ is a measure-preserving bijection which sends $N$ onto $[\omega]^\omega\smallsetminus N$.
\end{proof}


\begin{proof}[Proof of Theorem~\ref{thm_splitting}]
  By Theorem~\ref{thm:closure}, it suffices to show that there exists a hyperfinite equivalence relation $E$ which does not have property $\sfs$. To this end, let $E$ be the hyperfinite equivalence relation on $X=[\omega]^\omega$ given by
  \[
  x\mathrel{E}x'\iff x\mathrel{E}_0x'\text{\ \ or\ \ } x^c\mathrel{E}_0x'\;,
  \]
  and assume that $E$ is the orbit equivalence relation arising from the Borel action of the countable group   $\Gamma=\set{\gamma_i:i\in\omega}$ on $X$.

Now suppose that $E$ is Borel non-splitting, and define the Borel homomorphism $\phi\colon E\to E_\sset$ by
\[
\phi(x)(n)=\gamma_n x\;,
\]
so that each $\phi(x)$ enumerates the family $[x]_E$. Let $\psi\colon E\to E_0$ be a Borel homomorphism such that for all $x$ and $n$,
\[
\text{either}\quad\psi(x)\subset^*\phi(x)(n)\quad\text{or}\quad\psi(x)\subset^*(\phi(x)(n))^c\;.
\]
Then in particular, for each $x$ we have either $\psi(x)\subset^*x$ or $\psi(x)\subset^*x^c$.  Now put
\[
N=\set{x\in X:\psi(x)\subset^*x}\;.
\]
Since $\psi$ is Borel, so is $N$.

To complete the proof, we show that $N$ satisfies conditions (a) and (b) of Proposition~\ref{prop_nonmeas}, and therefore is not $m$-measurable, a contradiction.  For (a), suppose that $x\in N$, so that $\psi(x)\subset^*x$, and let $x'$ be such that $x'\mathrel{E}_0x$.  Then $\psi(x')\mathrel{E}_0\psi(x)$, so that $\psi(x')=^*\psi(x)\subset^*x=^*x'$, and hence $x'\in N$ too. For condition (b), suppose that $x\in N$ so that $\psi(x)\subset^*x$.  Then since $x^c\mathrel{E}x$, we have $\psi(x^c)\mathrel{E}_0\psi(x)$, so that $\psi(x^c)=^*\psi(x)\subset^*x$.  It follows that $\psi(x^c)\not\subset^*x^c$, and hence $x^c\not\in N$.  This shows that $x\in N$ implies $x^c\in 2^\omega\smallsetminus N$, and the converse is the same.
\end{proof}

%
%
%

We remark that as a consequence, we can also obtain a similar result for the properties derived from the cardinal characteristics $\mathfrak{par}_n$ discussed in \cite[Section~3]{blass}. In detail, let us say that $E$ \emph{has property} $\sfpar_n$ if for every Borel homomorphism $\phi\colon E\to E_\sset(2^{\omega^n})$, there exists a Borel homomorphism $\psi\colon E\to E_0([\omega]^\omega)$ such that for all $x\in X$, $\psi(x)$ is almost homogeneous for each function $\phi(x)(n)$.  (Here, $A\subset\omega$ is \emph{almost homogeneous} for a function $f$ if $A$ is almost equal to a set which is homogeneous for $f$.) It is easy to see that property $\sfpar_1$ coincides with property $\sfs$, and $\sfpar_n\rightarrow\sfpar_m$ for $n\geq m$. It follows that if $E$ has property $\sfpar_n$, then $E$ is smooth.

We close this section by noting that the converse of Theorem 6.1 also holds. Indeed, each of the properties in Definition~\ref{df:list}
holds of smooth relations. For properties other than $\sfs$, this is implicit in Proposition~\ref{prop:p} together with Theorems~\ref{thm:diagram} and \ref{thm:closure}, but intuitively, if $E$ is a smooth countable Borel equivalence relation on the standard Borel space $X$ with Borel transversal $B\subset X$, then for a given property $\mathsf{x}$ corresponding to the cardinal $\frx$, we simply use the unique point in $[x]_E\cap B$ as the base point for the appropriate diagonalization argument showing that
$\set{\phi(x)(n):n\in\omega}$ is not dominating with respect to $\mathfrak{x}$. This is perhaps hardest to carry out in the case of $\sfs$, so for completeness we do so here.

Thus suppose $E$ is a smooth countable Borel equivalence relation on the standard Borel space $X$, with Borel transversal $B\subset X$ and Borel function $\sigma\colon X\rightarrow X$ such that for each $x\in X$, $\sigma(x)$ is the unique element in $B\cap [x]_E$. Let $\phi\colon E\rightarrow E_\sset([\omega]^\omega)$ be a given Borel homomorphism. Inductively define the Borel functions $\phi_n'\colon X\rightarrow [\omega]^\omega$ by $\phi_0'(x)=\phi(\sigma(x))(0)$ and
\begin{equation*}
  \phi_{n+1}'(x)=
  \begin{cases}
    (\phi\circ\sigma)(x)(n+1)\,\cap\,\phi'(x)(n)
      & \text{if this set is infinite\;;} \\
    (\phi\circ\sigma)(x)(n+1)^c\,\cap\,\phi'(x)(n)
      & \text{otherwise}\;.
  \end{cases}
\end{equation*}
Then inductively define the Borel functions $\alpha_n\colon X\rightarrow\omega$ by
\[
\alpha_n(x)= \min\left[\phi_n'(x)\smallsetminus\set{\alpha_k(x):k<n}\right]\;,
\]
and finally set $\psi(x)(n)=\alpha_n(x)$. It is easily checked that $\psi(x)\subset^*\phi(x)(n)$ or $\psi(x)\subset^*\phi(x)(n)^c$ for each $x\in X$ and $n\in\omega$. Moreover, $\psi$ is not merely a Borel homomorphism $E\to E_0([\omega]^\omega)$, but in fact is $E$-invariant.

\section{Proof of the diagram}

In this section, we prove the implications between the combinatorial properties depicted in Figure~\ref{fig_invar}.

\begin{thm}
  \label{thm:diagram}
  The arrows in Figure~\ref{fig_invar} correspond to true implications between properties of countable Borel equivalence relations.
\end{thm}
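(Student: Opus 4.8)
The plan is to obtain every arrow of Figure~\ref{fig_invar} from the (relativized) invariant Borel morphism criterion of Section~5, supplemented by the fact that property $\sfs$ forces smoothness (Theorem~\ref{thm_splitting}). The bookkeeping is as follows. If $\mathsf{x}$ and $\mathsf{y}$ are the Borel invariant properties inspired by $\norm{\bm X}_\Phi$ and $\norm{\bm Y}_\Psi$, then to establish the implication $\mathsf{x}\rightarrow\mathsf{y}$ it suffices to exhibit an invariant Borel morphism $(\xi_-,\xi_+)$ from $\bm Y$ to $\bm X$ such that $\xi_+$ carries $\Psi$-families to $\Phi$-families; this is exactly the configuration that on the level of cardinals would witness $\norm{\bm X}_\Phi\leq\norm{\bm Y}_\Psi$, and indeed each of the eight arrows corresponds to a true \ZFC\ inequality. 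I would treat the arrows in three groups: those that are immediate inclusions of hypotheses, those needing only a trivial invariant morphism, and the two that carry the real content.

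The easy arrows first. The implications $\sfp\rightarrow\sft$ and $\sfr\rightarrow\sfu$ are immediate from Definition~\ref{df:list}: a tower is in particular a centered family and a centered family is in particular an arbitrary family, so in each case the hypothesis on $\phi$ imposed by the target property is a restriction of that imposed by the source, while the witnessing conclusion is identical. For $\sfb\rightarrow\sfd$ I take $\xi_-(g)(k)\defeq g(k)+1$ and $\xi_+\defeq\id$: raising a bound by one converts ``$\psi$ bounds $f$'' into ``$\psi$ eventually strictly exceeds $f$'', and $g\mapsto g+1$ respects $=^*$, so the morphism is invariant. For $\sfp\rightarrow\sfa$ the required invariant morphism is the one from Example~\ref{ex_pleqa}, with $\xi_-=\id$ and $\xi_+(a)=a^c$ sending almost disjoint families to centered ones. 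The arrow $\sfs\rightarrow\sfp$ I would handle outside the morphism framework: by Theorem~\ref{thm_splitting} property $\sfs$ implies smoothness, and a smooth relation satisfies $\sfp$ directly, using its Borel transversal as a base point for the pseudo-intersection diagonalization (cf.\ the end of Section~6). Finally $\sfr\rightarrow\sfi$ ($\frr\leq\fri$) runs between spaces of sets, so the invariance of $\xi_-$ is unproblematic and the work reduces to the (invariance-free) combinatorial morphism witnessing $\frr\leq\fri$.

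The genuine difficulty is concentrated in the two arrows that cross between the ``function'' world of $\omega^\omega$ under $\leq^*$ and the ``set'' world of $2^\omega$ under $\subset^*$ and splitting, namely $\sfp\rightarrow\sfb$ ($\frp\leq\frb$) and $\sfb\rightarrow\sfr$ ($\frb\leq\frr$). Here I would begin from the classical Galois--Tukey morphisms witnessing the cardinal inequalities and confront the one nontrivial requirement, the invariance of $\xi_-$. Examining the proof of the morphism criterion shows that $\xi_+$ need only be Borel, since applying it coordinatewise preserves homomorphisms into $E_\sset$, whereas $\xi_-$ must be a Borel homomorphism from $E_0(X_-)$ to $E_0(Y_-)$, as this is precisely what makes $\psi=\xi_-\circ\psi'$ a homomorphism into $E_0(Y_-)$. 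The classical choices fail exactly here: for $\frp\leq\frb$ one wants $\xi_-(b)=e_b$, the increasing enumeration of $b$, but a finite change to $b$ shifts the index of $e_b$, so $e_b$ changes by a shift rather than a finite modification; for $\frb\leq\frr$ one wants the union of alternate blocks $\bigcup_k[a_{2k},a_{2k+1})$ with $a_{i+1}=g(a_i)$, but iterating $g$ from a fixed starting point is wrecked by altering $g$ on a finite set. The crux of the theorem is therefore to replace these encodings by $E_0$-respecting substitutes that still force the required domination or splitting --- equivalently, in the $\sfb\rightarrow\sfr$ case, to manufacture from a quasi-invariant bound $\theta(x)$ a splitting set that depends only on the $=^*$-class of $\theta(x)$. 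I expect this to be the hardest and most delicate part of the proof. Once all eight arrows are secured, the rest of Figure~\ref{fig_invar} follows by transitivity; it is worth noting that the resulting implication $\sfs\rightarrow\sfr$ shadows no theorem of \ZFC, since $\frs<\frr$ is consistent, and survives only because its first leg $\sfs\rightarrow\sfp$ is powered by smoothness rather than by a cardinal inequality.
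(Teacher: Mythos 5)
Your overall architecture coincides with the paper's: it too derives the diagram from the invariant Borel morphism criterion of Section~5 together with Theorem~\ref{thm_splitting} for $\sfs\rightarrow\sfp$, and it dispatches $\sfp\rightarrow\sft$, $\sfp\rightarrow\sfa$, $\sfb\rightarrow\sfd$, and $\sfr\rightarrow\sfu$ with essentially the observations you give (your $\xi_-(g)=g+1$ for $\sfb\rightarrow\sfd$ is in fact slightly more careful than the paper's one-liner, which ignores that a bound can be $=^*$ to a member of the family). One caveat on $\sfr\rightarrow\sfi$: this arrow is \emph{not} witnessed by a pointwise morphism, since splitting a single member of an independent family says nothing about maximality. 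The paper instead passes from $\phi$ to a new homomorphism $\phi'$ whose value $\phi'(x)$ enumerates all finite Boolean combinations of sets from the family $\set{\phi(x)(n):n\in\omega}$, applies property $\sfr$ to $\phi'$, and observes that a set splitting every such combination witnesses non-maximal independence; this family-level move is available only because of the $E_\sset$ formulation, and your reduction to a ``combinatorial morphism'' glosses over it, though the repair is routine.

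The genuine gap is $\sfb\rightarrow\sfr$: you correctly diagnose the obstruction (iterating a bound $g$ from a fixed starting point to form blocks is destroyed by finite changes to $g$) but then stop, declaring the fix to be the hardest part of the proof. That fix is precisely the paper's content, and it is a concrete construction you do not supply: there is a Borel homomorphism $\lambda\from\omega^\omega\into\omega^\omega$ from $=^*$ to $=^*$ such that for every $f$, each interval $[\lambda(f)(j),\lambda(f)(j+1))$ contains an interval of the form $[n,f(n))$. The device is hyperfiniteness of eventual equality: write $=^*$ as an increasing union of finite Borel equivalence relations $F_i$, build block endpoints $j_i(f)$ inductively, and at stage $i+1$ replace the naive endpoint $k_{i+1}(f)$ by $\max\set{k_{i+1}(g):g\mathrel{F_i}f}$, so that $=^*$-equivalent inputs produce eventually identical endpoint sequences. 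Then $\psi'(x)$, the union of the odd blocks determined by $\lambda(\psi(x))$, is a homomorphism to $E_0$ and splits every $\phi(x)(m)$. Without this (or an equivalent device) your proof of that arrow does not exist. Separately, on $\sfp\rightarrow\sfb$ (Lemma~\ref{lem:pb}): be aware that this is not a delicate-but-routine adaptation one should ``expect'' to complete; it was an open problem, resolved by M\'atrai and Stepr\=ans, and the paper itself only outlines their construction --- a map $\xi_+$ with centered image carrying every $\leq^*$-unbounded family to a family with no pseudo-intersection, with $\xi_-(A)$ a Borel-computable bound for $\set{f\in\omega^\omega:A\subset^*\xi_+(f)}$ --- deferring the proof elsewhere. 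So on that one arrow your position matches the paper's (neither contains a complete proof), but your text underestimates its status.
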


The first implication, $\sfs\rightarrow\sfp$, follows from Theorem~\ref{thm_splitting} together with the fact that smooth equivalence relations have property $\sfp$.  In fact, this implication is not reversible, since there also exist nonsmooth relations with property $\sfp$.

\begin{prop} \label{prop:p}
  If $E$ is hyperfinite, then $E$ has property $\sfp$.
\end{prop}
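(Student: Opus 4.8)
The plan is to exploit the hyperfinite structure $E=\bigcup_n F_n$, with each $F_n$ a finite Borel equivalence relation and $F_n\subset F_{n+1}$, in order to manufacture a pseudo-intersection that is built up ``$F_n$-equivariantly.'' Suppose $E=E_\Gamma^X$ arises from the Borel action of $\Gamma=\set{\gamma_i:i\in\omega}$, and let $\phi\from X\into([\omega]^\omega)^\omega$ be a Borel homomorphism from $E$ to $E_\sset$ such that $\mathcal{A}_x\defeq\set{\phi(x)(m):m\in\omega}$ is centered for each $x$; since $\phi$ is a homomorphism to $E_\sset$, the family $\mathcal{A}_x$ depends only on $[x]_E$. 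The naive approach would diagonalize through an enumeration of $\mathcal{A}_x$ to extract a pseudo-intersection, but as observed in Section 4 this depends essentially on the enumeration and so cannot be expected to yield a homomorphism to $=^*$.

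The key idea is to replace the enumeration-dependent finite intersections by intersections that are canonically attached to the finite classes $[x]_{F_k}$. Concretely, for each $k$ I would set
\[B_k(x)\defeq\bigcap\set{\phi(z)(m):z\in[x]_{F_k}\text{ and }m\leq k}\;.\]
Because $[x]_{F_k}$ is finite and $m$ ranges over $\set{0,\dots,k}$, this is a \emph{finite} intersection of members of the centered family $\mathcal{A}_x$, hence an infinite subset of $\omega$; moreover $B_k(x)$ depends only on the finite class $[x]_{F_k}$, so $x\mapsto B_k(x)$ is $F_k$-invariant and Borel (one may write $B_k(x)=\bigcap\set{\phi(\gamma_ix)(m):\gamma_ix\mathrel{F_k}x,\ m\leq k}$). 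I would then let $a_k(x)$ be the $k$-th element of $B_k(x)$ in increasing order and put $\psi(x)\defeq\set{a_k(x):k\in\omega}$. Since the $k$-th element of any infinite subset of $\omega$ is at least $k$, we have $a_k(x)\geq k$, so $\psi(x)$ is infinite and lies in $[\omega]^\omega$, and $\psi$ is visibly Borel.

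It then remains to verify the two defining features of property $\sfp$. For the pseudo-intersection property, fix $x$ and a set $\phi(x)(m_0)$; taking $z=x\in[x]_{F_k}$, we see that for every $k\geq m_0$ the set $\phi(x)(m_0)$ is among the finitely many sets intersected in $B_k(x)$, so $a_k(x)\in B_k(x)\subset\phi(x)(m_0)$. Hence all but finitely many elements of $\psi(x)$ lie in $\phi(x)(m_0)$, giving $\psi(x)\subset^*\phi(x)(n)$ for every $n$. For quasi-invariance, if $x\mathrel{E}y$ then $x\mathrel{F_k}y$ for all large $k$, whence $[x]_{F_k}=[y]_{F_k}$ and therefore $B_k(x)=B_k(y)$ and $a_k(x)=a_k(y)$ for all large $k$; thus $\psi(x)=^*\psi(y)$, so $\psi$ is a homomorphism from $E$ to $=^*$.

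The crux --- and the step that drives the whole construction --- is the simultaneous achievement of these two properties. The pseudo-intersection property wants the finite intersections to be anchored at the fixed base point $x$ (so that each member $\phi(x)(m_0)$ is eventually captured), while quasi-invariance wants the intersections to forget the base point entirely. The device of intersecting over the \emph{entire} finite class $[x]_{F_k}$ reconciles these competing demands: it keeps $x$ itself among the base points (securing the pseudo-intersection property via $z=x$) while rendering $B_k$ genuinely $F_k$-invariant (securing the $=^*$ homomorphism via the eventual stabilization $[x]_{F_k}=[y]_{F_k}$). Everything else is a routine check of Borelness together with the bookkeeping that $a_k(x)\to\infty$.
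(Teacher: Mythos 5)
Your proof is correct, and it follows the same broad strategy as the paper's own proof: express $E=\bigcup_n F_n$ with each $F_n$ finite Borel, and run the standard pseudo-intersection diagonalization in an $F_n$-equivariant way. The implementations differ in one detail that matters, however. The paper sets $a_{n+1}(x)\defeq\min\brackets{\parens{\bigcap_{i\leq n}\phi(x)(i)}\smallsetminus\set{a_i(y):i\leq n\text{ and }y\mathrel{F}_nx}}$, so its intersection is anchored to $x$'s own enumeration and only the \emph{excluded} set is computed over the whole $F_n$-class; it then asserts that the sequences $\seq{a_n(x)}$ and $\seq{a_n(y)}$ are eventually equal whenever $x\mathrel{E}y$. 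That assertion is not automatic: $x\mathrel{E}y$ only guarantees that $\phi(x)$ and $\phi(y)$ enumerate the same family, possibly in different orders, so the partial intersections $\bigcap_{i\leq n}\phi(x)(i)$ and $\bigcap_{i\leq n}\phi(y)(i)$ --- and hence the chosen minima --- can disagree for infinitely many $n$ (for instance, take $A_j$ to be the multiples of the $j$-th prime and let the enumeration attached to $y$ transpose consecutive pairs of the enumeration attached to $x$; the exclusion mechanism then guarantees these disagreeing values never enter the other point's set, so $\psi(x)$ and $\psi(y)$ have infinite symmetric difference). Your device of intersecting over \emph{all} $\phi(z)(m)$ with $z\in[x]_{F_k}$ and $m\leq k$ makes $B_k$, and hence $a_k$, literally $F_k$-invariant, so the homomorphism property of $\psi$ is immediate rather than in need of an argument; your canonical choice of the $k$-th element of $B_k(x)$ also replaces the paper's exclusion set, since infiniteness of $\psi(x)$ comes for free from $a_k(x)\geq k$. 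In short, you take the same route, but your class-invariant intersection is precisely the repair that the paper's own invariance claim requires.
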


\begin{proof}
  Express $E$ as the increasing union of finite Borel equivalence relations $F_n$, and suppose we are given a Borel homomorphism $\phi\colon E\to E_\sset([\omega]^\omega)$ such that for each $x\in X$, the family $\set{\phi(x)(n):n\in\omega}$ is centered. Set $\alpha_0(x)=0$ for all $x\in X$, and inductively define functions $a_n\colon X\to\omega$ as follows.  Given $a_0(x),\ldots,a_n(x)$, let
  \[
  a_{n+1}(x)=\min\left(\bigcap\{\phi(y)(i) : i\leq n \text{ and }x\mathrel{F}_ny\} \smallsetminus \set{a_i(y):i\leq n\text{ and } x\mathrel{F}_ny}\right)\;.
  \]
  This construction is just a slight reorganizing of the usual diagonalization, and if we let $\psi(x)=\set{a_n(x):n\in\omega}$, then $\psi(x)$ is a pseudo-intersection of the set $\set{\phi(x)(n):n\in\omega}$.  Moreover, the $a_n$ have the property that if $x\mathrel{E}y$, then the sequences $\seq{a_n(x):n\in\omega}$ and $\seq{a_n(y):n\in\omega}$ will eventually be equal, so that $\psi$ is also a homomorphism from $E$ to $E_0([\omega]^\omega)$.
\end{proof}

Currently, $\sfs\rightarrow\sfp$ and its consequences are the only implications that we can prove are nonreversible.

The remainder of the proof of Theorem~\ref{thm:diagram} will be given in a series of lemmas.  In most cases the proofs amount to trivial observations, such as noticing that the standard proof of the corresponding cardinal inequality can be carried out in a Borel and invariant fashion.  However, in a few cases some care is needed to make sure that this can be done.

\begin{lem}
  $\sfp\rightarrow\sft$.
\end{lem}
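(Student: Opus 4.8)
The plan is to observe that property $\sft$ differs from property $\sfp$ only in having a \emph{stronger} hypothesis on the input homomorphism $\phi$, while demanding the \emph{identical} conclusion. For $\sfp$ one requires that each family $\set{\phi(x)(n):n\in\omega}$ be centered; for $\sft$ one requires that it admit a well-ordering compatible with $\subset^*$, i.e.\ that it be a tower. In both cases the desired output is a Borel homomorphism $\psi\from X\into[\omega]^\omega$ from $E$ to $=^*$ with $\psi(x)\subset^*\phi(x)(n)$ for all $x$ and $n$. Thus, to deduce $\sft$ from $\sfp$, it suffices to check that any family admitting a well-ordering compatible with $\subset^*$ is in particular centered: then any $\phi$ witnessing a potential failure of $\sft$ already satisfies the hypothesis of $\sfp$, and the same $\phi$ may be fed directly into property $\sfp$ to produce the required $\psi$.

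So, given $E$ with property $\sfp$ and a Borel homomorphism $\phi\from X\into([\omega]^\omega)^\omega$ from $E$ to $E_\sset$ such that each $\set{\phi(x)(n):n\in\omega}$ admits a well-ordering compatible with $\subset^*$, I would first verify centeredness. Since such a family is linearly ordered by $\subset^*$, any two of its members are $\subset^*$-comparable; hence any finite subfamily $\phi(x)(n_1),\dots,\phi(x)(n_k)$ has a $\subset^*$-least element, say $\phi(x)(n_j)$, and then $\bigcap_{\ell\le k}\phi(x)(n_\ell)=^*\phi(x)(n_j)$ is infinite. This shows that $\set{\phi(x)(n):n\in\omega}$ is centered for each $x$, so $\phi$ satisfies the hypotheses of property $\sfp$. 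Applying $\sfp$ to this very $\phi$ returns a Borel homomorphism $\psi$ with exactly the properties demanded by $\sft$.

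There is essentially no obstacle here beyond the one-line combinatorial observation that towers are centered, which is precisely the same fact underlying the \ZFC-provable inequality $\frp\le\frt$ noted after Figure~\ref{fig_invar}. Alternatively, the implication fits the morphism framework of Section~5: both $\sfp$ and $\sft$ are inspired by the same relation $([\omega]^\omega,\mathcal P(\omega),\not\subset^*)$ relativized to different properties $\Phi$, and the identity pair $(\xi_-,\xi_+)=(\id,\id)$ is trivially an invariant Borel morphism from this relation to itself; the required condition ``$\xi_+(\mathcal F)$ is centered whenever $\mathcal F$ is a tower'' is again just the observation above. I note that no base-point selection, fibrewise uniformization, or second diagonalization is needed, since neither $\phi$ nor $\psi$ is modified --- the entire content is the containment of hypotheses.
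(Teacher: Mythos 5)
Your proof is correct and takes essentially the same approach as the paper, whose entire argument is the one-line observation that every tower is a centered family; you simply spell out the details (the $\subset^*$-least element of a finite subfamily witnesses centeredness) and note that the conclusions of $\sfp$ and $\sft$ are identical.
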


\begin{proof}
  This holds simply because every tower is a centered family.
\end{proof}

\begin{lem}
  $\sfp\rightarrow\sfa$.
\end{lem}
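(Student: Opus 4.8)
The plan is to run the Borel, invariant analogue of the classical argument that $\frp\leq\fra$ recorded in Example~\ref{ex_pleqa}: pass from an almost disjoint family to the family of complements, which is centered, apply property $\sfp$, and then translate the resulting pseudo-intersection back. In the language of Section~5 this amounts to exhibiting the invariant Borel morphism $(\xi_-,\xi_+)$ with $\xi_-(b)\defeq b$ and $\xi_+(C)\defeq C^c$ from the relation $([\omega]^\omega,[\omega]^\omega,\text{``meets infinitely''})$ inspiring $\sfa$ to the relation $([\omega]^\omega,2^\omega,\not\subset^*)$ inspiring $\sfp$, and checking that $\xi_+$ sends almost disjoint families to centered ones; since $\xi_-=\id$, this morphism is trivially invariant. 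I think it is cleaner, however, to argue directly.

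So suppose $E$ has property $\sfp$, and let $\phi\from X\into([\omega]^\omega)^\omega$ be a Borel homomorphism from $E$ to $E_\sset$ such that $\set{\phi(x)(n):n\in\omega}$ is almost disjoint for each $x\in X$. First I would define $\phi'\from X\into([\omega]^\omega)^\omega$ by $\phi'(x)(n)\defeq\phi(x)(n)^c$; since complementation is Borel and commutes with $E_\sset$, the map $\phi'$ is again a Borel homomorphism from $E$ to $E_\sset$. Next I would verify that each family $\set{\phi'(x)(n):n\in\omega}$ is centered: a finite subfamily has intersection $\parens{\phi(x)(n_1)\cup\cdots\cup\phi(x)(n_k)}^c$, which is infinite because a finite union of members of an infinite almost disjoint family is co-infinite. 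Applying property $\sfp$ to $\phi'$ then yields a Borel homomorphism $\psi\from X\into[\omega]^\omega$ from $E$ to $=^*$ with $\psi(x)\subset^*\phi'(x)(n)=\phi(x)(n)^c$ for all $x$ and $n$. But $\psi(x)\subset^*\phi(x)(n)^c$ says exactly that $\abs{\psi(x)\cap\phi(x)(n)}<\aleph_0$, so this same $\psi$ witnesses property $\sfa$.

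The only step requiring genuine care is the verification that $\set{\phi'(x)(n):n\in\omega}$ is centered, and the point to flag is that it uses that the almost disjoint family $\set{\phi(x)(n):n\in\omega}$ is \emph{infinite}: for a finite almost disjoint family, such as the evens together with the odds, the complements need not be centered, and indeed no infinite set can have finite intersection with every member. As is standard in this area, I would read ``almost disjoint family'' to mean an infinite one, so that for each $x$ the set $\set{\phi(x)(n):n\in\omega}$ is infinite and the co-infiniteness of finite unions used above is automatic. Everything else—Borelness of $\phi'$ and that it is a homomorphism to $E_\sset$—is routine.
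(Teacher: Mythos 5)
Your proof is correct and takes essentially the same approach as the paper: the paper's proof simply observes that the complementation morphism $\xi_-(b)=b$, $\xi_+(a)=a^c$ from Example~\ref{ex_pleqa} is Borel and invariant, and your direct argument is exactly this morphism unwound. Your flag about needing the almost disjoint family to be infinite is well taken and matches the paper, which also restricts to infinite almost disjoint families in Example~\ref{ex_pleqa}.
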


\begin{proof}
  It suffices to observe that the morphism $\xi_-(b)=b$, $\xi_+(a)=a^c$ described in Example~\ref{ex_pleqa} is Borel and invariant.
\end{proof}

\begin{lem} \label{lem:pb}
  $\sfp\rightarrow\sfb$.
\end{lem}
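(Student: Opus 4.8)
The plan is to deduce $\sfb$ from $\sfp$ directly, by recasting a bounding problem as a pseudo-intersection problem. Suppose $E$ has property $\sfp$, let $\phi\from X\into(\omega^\omega)^\omega$ be a Borel homomorphism from $E$ to $E_\sset$, and fix once and for all a Borel identification of $\omega$ with $\omega\times\omega$. First I would reduce to the case in which every $\phi(x)(n)$ is strictly increasing: applying the operator $T(f)(k)\defeq k+\max_{j\le k}f(j)$ to each member of the family yields a Borel homomorphism $\phi'\from X\into(\omega^\omega)^\omega$ from $E$ to $E_\sset$ with each $\phi'(x)(n)$ strictly increasing and $\phi(x)(n)\le\phi'(x)(n)$ pointwise, so any eventual bound for $\set{\phi'(x)(n):n\in\omega}$ is also one for the original family. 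For fixed $x$ write $f_n=\phi'(x)(n)$, and encode each $f_n$ as the region above its graph, $A_{f_n}\defeq\set{(k,m):m>f_n(k)}$.

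The hard part is that a pseudo-intersection of $\set{A_{f_n}:n\in\omega}$ need not reveal a bound: a set concentrated in a single column, say $\set{k}\times S$, is almost contained in every $A_f$ and carries no information about $f$ off that column, so the obvious analogue of the textbook morphism for $\frp\le\frb$ fails. To force any pseudo-intersection to spread out, I would enlarge the family by the ``column tails'' $C_j\defeq\set{(k,m):k\ge j}$. Combining the two families through a fixed bijection $\omega\cong\omega\sqcup\omega$ produces a Borel homomorphism $\Phi\from X\into([\omega]^\omega)^\omega$ from $E$ to $E_\sset$ whose value-set $\set{\Phi(x)(n):n\in\omega}=\set{A_{f_n}:n\in\omega}\cup\set{C_j:j\in\omega}$ is centered, since any finite intersection contains $\set{(k,m):k\ge J,\ m>\max_i f_{n_i}(k)}$ for suitable $J$, which is infinite. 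Because the $C_j$ do not depend on $x$, both the homomorphism property and centeredness are immediate. Property $\sfp$ then supplies a Borel homomorphism $P\from X\into[\omega]^\omega$ from $E$ to $E_0$ with $P^x\subset^*\Phi(x)(n)$ for all $x$ and $n$.

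Finally I would decode $P^x$ into a bound. Since $P^x\subset^*C_j$ for every $j$, the set $P^x$ has finitely many points in each initial block of columns; hence it meets every column finitely and meets infinitely many columns. Let $K^x\subseteq\omega$ be the infinite set of columns met, put $m_k\defeq\max\set{m:(k,m)\in P^x}$ for $k\in K^x$, and for arbitrary $k$ let $\kappa(k)$ be the least element of $K^x$ that is $\ge k$; then $\psi(x)(k)\defeq m_{\kappa(k)}$ defines a Borel function $\psi\from X\into\omega^\omega$. Since $P^x\subset^*A_{f_n}$, all but finitely many points of $P^x$ lie above the graph of $f_n$, so for all large $k\in K^x$ the top point satisfies $m_k>f_n(k)$; as each $f_n$ is increasing, for all large $k$ we obtain $\psi(x)(k)=m_{\kappa(k)}>f_n(\kappa(k))\ge f_n(k)$, whence $f_n\le^*\psi(x)$. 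That $\psi$ is a homomorphism from $E$ to $=^*$ follows because $P^x$ and $P^y$ are eventually equal when $x\mathrel{E}y$, forcing $K^x$ and $K^y$ together with the corresponding column maxima to agree on a tail. Unwinding the reduction gives $\phi(x)(n)\le^*\psi(x)$ for all $x$ and $n$, which is exactly property $\sfb$. The one genuinely new ingredient beyond the classical argument that $\frp\le\frb$ is the use of the column tails $C_j$, together with the monotonicity arranged in the first step, to keep the pseudo-intersection from degenerating.
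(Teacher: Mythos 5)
Your proof is correct, and it takes a genuinely different---and far more elementary---route than the paper's. The paper treats this implication as a deep fact: the authors report that they could obtain only $\sfp\rightarrow\sfu$, and that $\sfp\rightarrow\sfb$ remained open until M\'atrai and Stepr\=ans supplied an invariant Borel morphism $\xi_-\from[\omega]^\omega\into\omega^\omega$, $\xi_+\from\omega^\omega\into[\omega]^\omega$ with $\im(\xi_+)$ centered and with $A\subset^*\xi_+(f)$ implying $f\leq^*\xi_-(A)$; the paper only sketches that construction. The structural reason a morphism is hard to build is that $\xi_+$ must act on challenges one at a time, and since property $\sfb$ places no restriction on its input families, $\xi_+$ must carry \emph{every} countable family of functions to a centered family, which forces $\im(\xi_+)$ itself to be centered. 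You sidestep this entirely by observing that the definition of property $\sfp$ never requires the input homomorphism to arise from a pointwise map on challenges: you may append the \emph{constant} sets $C_j$ to $\set{A_{f_n}}$, and the resulting $\Phi$ is still a Borel homomorphism from $E$ to $E_\sset$ with centered values, since the $C_j$ are independent of $x$ while $\set{A_{f_n}}$ depends only on the $E$-invariant set of functions enumerated by $\phi'(x)$. This is exactly what lets the classical two-dimensional proof of $\frp\leq\frb$ run invariantly, contrary to the paper's remark that the classical proof ``apparently cannot be carried out in an invariant fashion'' (a remark that is accurate for the tower proof of $\frp\leq\frt\leq\frb$, whose recursion depends on a well-ordering of the family, but not for your argument). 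Because your claim contradicts the paper's narrative, I verified the delicate points: monotonization via $T$ is coordinatewise, so it preserves the $E_\sset$-homomorphism property and gives $\phi(x)(n)\leq T(\phi(x)(n))$ pointwise; a pseudo-intersection $P^x$ of the augmented family satisfies $P^x\subset^*C_j$ for all $j$, hence has finite columns and meets infinitely many columns, so $K^x$, the column maxima $m_k$, and $\kappa$ are well defined and Borel; the decoded value $\psi(x)(k)$ depends only on the columns of $P^x$ with index $\geq k$, so a finite symmetric difference between $P^x$ and $P^y$ changes the decoded function in at most finitely many places, giving the required homomorphism from $E$ to $=^*$; and monotonicity of $f_n$ together with $\kappa(k)\geq k$ yields $m_{\kappa(k)}>f_n(\kappa(k))\geq f_n(k)$ for all sufficiently large $k$, hence $\phi(x)(n)\leq^*\psi(x)$. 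All of this checks out. What the paper's approach buys that yours does not is the morphism itself: a pointwise invariant Borel morphism witnesses a Borel Tukey reduction, with further consequences for the Borel Tukey ordering and forcing absoluteness discussed in Sections 5 and 7, whereas your non-pointwise construction yields only the implication $\sfp\rightarrow\sfb$---which, for this lemma, is all that is required.
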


This is a difficult case in which the classical proof that $\frp\leq\frb$ apparently cannot be carried out in an invariant fashion.  We were able to obtain only the weaker result $\sfp\rightarrow\sfu$, and the problem remained open until Tam\'as M\'atrai and Juris Stepr\=ans provided us with a positive answer. Since the proof appears in \cite{tukey}, we give only a brief outline here.

To establish $\sfp\rightarrow\sfb$ with a morphism, we require maps $\xi_-\colon[\omega]^\omega\to\omega^\omega$, $\xi_+\colon\omega^\omega\to[\omega]^\omega$ such that $\im(\xi_+)$ is centered and
\[
A\subset^*\xi_+(f)\implies f\leq^*\xi_-(A)\;.
\]
To do this one constructs the map $\xi_+$ with the additional property that for every $\leq^*$-unbounded subset $S\subset\omega^\omega$, the set $\xi_+(S)$ does not have a pseudo-intersection.  It follows that for each $A\in[\omega]^\omega$, the set $S_A=\set{f\in\omega^\omega:A\subset^*\xi_+(f)}$ is $\leq^*$-bounded.  Letting $\xi_-(A)$ be such a bound, it is easy to see that $\xi_-,\xi_+$ satisfy the required properties.  Finally, it is possible to compute the bounds $\xi_-(A)$ in a Borel fashion.\footnote{In this construction, $\xi_-$ will be $E_0$-invariant.  In fact we can obtain this extra property automatically by the following general observation: If there is a Borel morphism from $\bm{A}$ to $\bm{B}$, $\neg A$ is transitive, and $E_0$ is Borel non-$\bm{A}$, then there is an \emph{invariant} Borel morphism from $\bm{A}$ to $\bm{B}$.}

\begin{lem}
  $\sfb\rightarrow\sfd$.
\end{lem}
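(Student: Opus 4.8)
The plan is to prove $\sfb\rightarrow\sfd$ directly from the definitions, mirroring the trivial \ZFC\ inequality $\frb\leq\frd$. Recall that $\sfb$ concerns boundedness (every class-family is eventually dominated by $\psi(x)$) while $\sfd$ is the \emph{non-dominating} property, which asks for a $\psi(x)$ that is \emph{not} eventually dominated by any member of the family. The classical observation is that a single function which eventually dominates an entire countable family cannot itself be eventually dominated by any member of that family (as long as the family is genuinely infinite and the dominating function strictly grows), so a witness to boundedness can be massaged into a witness to non-domination.

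First I would let $E$ be a countable Borel equivalence relation with property $\sfb$, and let $\phi\from X\into(\omega^\omega)^\omega$ be an arbitrary Borel homomorphism from $E$ to $E_\sset$. Applying property $\sfb$, I obtain a Borel homomorphism $\psi_0\from X\into\omega^\omega$ from $E$ to $=^*$ such that $\phi(x)(n)\leq^*\psi_0(x)$ for all $x\in X$ and $n\in\omega$. The goal is to produce from $\psi_0$ a Borel homomorphism $\psi\from X\into\omega^\omega$ from $E$ to $=^*$ witnessing $\sfd$, i.e.\ such that $\psi(x)\not\leq^*\phi(x)(n)$ for all $x$ and $n$. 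The natural move is to strictly increase $\psi_0$, for instance setting $\psi(x)(k)\defeq\psi_0(x)(k)+1$, or $\psi(x)(k)\defeq\psi_0(x)(k)+k$; since $\phi(x)(n)\leq^*\psi_0(x)<^*\psi(x)$, each $\phi(x)(n)$ is eventually strictly below $\psi(x)$, and hence $\psi(x)\not\leq^*\phi(x)(n)$. This $\psi$ is clearly Borel and remains a homomorphism from $E$ to $=^*$ because adding a fixed Borel function preserves eventual equality.

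The one step requiring genuine care is the conclusion $\phi(x)(n)\leq^*\psi_0(x)\implies\psi(x)\not\leq^*\phi(x)(n)$, and this is exactly where the main (and only) subtlety lies. The implication $\alpha\leq^*\beta\implies\gamma\not\leq^*\alpha$ for $\gamma=\beta+1$ is \emph{not} valid in general, because eventual domination is reflexive-ish up to finite error: if $\alpha=^*\beta$ then $\alpha\leq^*\beta$ holds but we need $\psi(x)\not\leq^*\alpha$. Here $\psi(x)(k)=\psi_0(x)(k)+1>\psi_0(x)(k)\geq^*\phi(x)(n)(k)$ for cofinitely many $k$, so $\psi(x)(k)>\phi(x)(n)(k)$ cofinitely often, which indeed gives $\psi(x)\not\leq^*\phi(x)(n)$ (in fact $\phi(x)(n)<^*\psi(x)$ strictly). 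So the argument does go through, but I would state the strict inequality $\phi(x)(n)<^*\psi(x)$ explicitly to make the non-domination transparent rather than relying on a bare $\leq^*/\not\leq^*$ manipulation.

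In short, the proof is essentially immediate once property $\sfb$ is invoked: the boundedness witness, after a strict upward perturbation, automatically becomes a non-domination witness. I expect no real obstacle beyond being careful that the perturbed function strictly dominates \emph{cofinitely often} so that $\psi(x)\not\leq^*\phi(x)(n)$ genuinely holds, and verifying that adding a fixed coordinate-wise increment preserves the homomorphism-to-$=^*$ property (which it does, since $\psi_0(x)=^*\psi_0(y)$ implies $\psi_0(x)+1=^*\psi_0(y)+1$ whenever $x\mathrel{E}y$).
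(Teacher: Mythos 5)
Your proof is correct and follows essentially the same route as the paper, whose entire proof is the one-line remark that a function bounding a family also witnesses that the family is not dominating. In fact, your extra care is warranted: taken literally, the paper's one-liner is inaccurate, since the bounding function $\psi_0$ itself could be $=^*$ to some member $\phi(x)(n)$ of the family (in which case $\psi_0(x)\leq^*\phi(x)(n)$), and your explicit perturbation $\psi(x)\defeq\psi_0(x)+1$, together with the observation that $\phi(x)(n)<^*\psi(x)$ cofinitely often gives $\psi(x)\not\leq^*\phi(x)(n)$, is exactly what is needed to make the trivial argument airtight while clearly preserving the homomorphism to $=^*$.
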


\begin{proof}
  This holds simply using the identity morphism, because any function which bounds a given family also witnesses that the family is not dominating.
\end{proof}

\begin{lem}
  $\sfb\rightarrow\sfr$.
\end{lem}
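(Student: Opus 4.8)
The plan is to deduce $\sfb\rightarrow\sfr$ from the machinery of the previous section by producing an invariant Borel morphism from the reaping relation to the bounding relation. Here $\frr=\norm{\bm{R}}$ for $\bm{R}=([\omega]^\omega,[\omega]^\omega,\text{``is not split by''})$ and $\frb=\norm{\bm{B}}$ for $\bm{B}=(\omega^\omega,\omega^\omega,\not\geq^*)$, where $g\mathrel{B}f$ iff $\neg(f\leq^* g)$; one checks directly that both relations are invariant and that Borel non-$\bm{B}$ and Borel non-$\bm{R}$ are exactly the properties $\sfb$ and $\sfr$. By the theorem on invariant Borel morphisms it then suffices to produce Borel maps $\xi_+\from[\omega]^\omega\into\omega^\omega$ and $\xi_-\from\omega^\omega\into[\omega]^\omega$, with $\xi_-$ a homomorphism from $=^*$ to $E_0$, such that whenever $\xi_-(b)$ does not split $a$ then $\xi_+(a)\not\leq^* b$; equivalently, whenever $\xi_+(a)\leq^* b$ then $\xi_-(b)$ splits $a$. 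This is the Borel and invariant incarnation of the classical proof that $\frb\leq\frr$.

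The natural candidates come straight from that classical proof. Let $\xi_+(a)$ be the increasing enumeration of the infinite set $a$, and build $\xi_-(b)$ from an interval partition adapted to $b$: writing $\bar{b}(n)=\max_{i\leq n}b(i)$, set $c_0=0$, $c_{k+1}=\bar{b}(c_k)+1$, and let $\xi_-(b)=\bigcup_k[c_{2k},c_{2k+1})$. The morphism inequality is then the usual interval argument. If $\xi_+(a)\leq^* b$, then for all large $k$ we have $c_k\leq\xi_+(a)(c_k)\leq\bar{b}(c_k)<c_{k+1}$, so $\xi_+(a)(c_k)\in[c_k,c_{k+1})$, and hence $a$ meets every interval $[c_k,c_{k+1})$ from some point on. In particular $a$ meets infinitely many even-indexed and infinitely many odd-indexed intervals, which says precisely that $\xi_-(b)$ splits $a$. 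Both maps are visibly Borel, and no invariance is required of $\xi_+$ (applying it coordinatewise to an $E_\sset(A_+)$-homomorphism again produces an $E_\sset(B_+)$-homomorphism).

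The main obstacle is the invariance of $\xi_-$, that is, that it be a homomorphism from $=^*$ to $E_0$, and here the construction above fails: altering $b$ at a single coordinate can shift one endpoint $c_k$, and since $c_{k+1}$ is defined recursively from $c_k$ this perturbation cascades through all later endpoints, so that $\xi_-(b)$ and $\xi_-(b')$ typically differ on infinitely many intervals rather than on a finite set. Moreover the shortcut exploited for $\sfp\rightarrow\sfb$—the footnote promoting a Borel morphism to an invariant one when $\neg A$ is transitive—is unavailable here, since $\neg R$ is the relation ``splits,'' which is not transitive. The real content of the lemma is therefore to redesign the passage from the bound $b$ to a splitting set so that finite changes in $b$ produce only finite changes in the output. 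The difficulty is a genuine tension: to force every $a$ with $\xi_+(a)\leq^* b$ to meet each interval one seems to need the right endpoint $c_{k+1}$ to overtake the \emph{value} $\bar{b}(c_k)$ at the left endpoint, a position-versus-value comparison that drives the recursive, non-invariant definition, whereas invariance wants the endpoints to depend on $b$ only through finite-change-robust data, such as the threshold crossings of $\bar{b}$. Reconciling these two demands—catching every dominated set while remaining robust under $=^*$—is where I expect the essential work of the proof to lie.
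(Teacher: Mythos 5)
Your framework is the right one, and it is in substance the paper's own route: the paper proves the lemma by making the classical $\frb\leq\frr$ argument Borel and invariant, which amounts exactly to the invariant morphism you describe, with $\xi_+(a)\defeq f_a$ (a function such that every interval $[n,f_a(n))$ meets $a$) and $\xi_-$ given by alternate intervals of a partition built from the bound. Your diagnosis of the obstruction is also exactly right: the naive recursion $c_{k+1}=\bar b(c_k)+1$ cascades, so finite changes in $b$ produce infinitely many changed endpoints, and the footnote trick from the $\sfp\rightarrow\sfb$ discussion is unavailable. But you stop precisely there, declaring that reconciling the recursion with $=^*$-invariance is "where the essential work lies" without doing that work. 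That is the one nontrivial step of the proof, so what you have is a correct reduction with a genuine gap at its core.

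The missing idea is to invariantize the recursion using the hyperfiniteness of $E_0$. Write $\ord=^*$ on $\omega^\omega$ as an increasing union of \emph{finite} Borel equivalence relations $F_i$, and define the endpoints recursively, but at each stage take a maximum over the finite $F_i$-class: given $j_i$, first choose $k_{i+1}(f)$ so that $[j_i(f),k_{i+1}(f))$ contains an interval of the form $[n,f(n))$, and then set
\[j_{i+1}(f)\defeq\max\set{k_{i+1}(g):g\mathrel{F_i}f}\;.
\]
If $f=^*g$, then $f\mathrel{F_i}g$ for all large $i$, so the two recursions produce identical endpoints from that stage on; hence $\lambda(f)\defeq\seq{j_i(f)}$ is a Borel homomorphism from $\ord=^*$ to $\ord=^*$, and since $j_{i+1}(f)\geq k_{i+1}(f)$, each interval $[\lambda(f)(i),\lambda(f)(i+1))$ still contains some $[n,f(n))$. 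This resolves your position-versus-value tension: the value-driven recursion is retained, but its output is forced to depend only on the eventual $F_i$-data of the input. Now your plan goes through verbatim: take $\xi_-(b)\defeq\bigcup_{\text{$i$ odd}}[\lambda(b)(i),\lambda(b)(i+1))$, which is a Borel homomorphism from $\ord=^*$ to $E_0$ because eventually equal inputs yield partitions with eventually identical intervals; and if $f_a\leq^*b$, then all but finitely many intervals $[n,b(n))$ meet $a$, so $a$ meets every interval of $\lambda(b)$ from some point on, and in particular infinitely many odd-indexed and infinitely many even-indexed ones, i.e., $\xi_-(b)$ splits $a$. This "maximum over the finite $F_i$-class" device is the same trick the paper uses elsewhere (for instance, in showing hyperfinite relations are Borel bounded), and it is exactly what your proposal is missing.
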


\begin{proof}
  We follow the proof that $\frb\leq\frr$ that is given in \cite{blass}.  Given a subset $x\subset\omega$, let $f_x\colon\omega\to\omega$ be a function such that each interval $[n,f_x(n))$ contains an element of $x$.  Now, given a Borel homomorphism $\phi\colon E\rightarrow E_\sset([\omega]^\omega)$, apply property $\sfb$ to the function $\tilde\phi(x)(n)=f_{\phi(x)(n)}$ to obtain a Borel homomorphism $\psi\colon E\to E_0([\omega]^\omega)$ such that for all $x$ and $n$, $\psi(x)$ eventually dominates $f_{\phi(x)(n)}$.

  \begin{claim*}
    There exists a Borel homomorphism $\lambda\colon E_0(\omega^\omega)\to E_0(\omega^\omega)$ such that for every $f\in\omega^\omega$, every interval $[\lambda(f)(j),\lambda(f)(j+1))$ contains an interval of the form $[n,f(n))$.
  \end{claim*}

  \begin{claimproof}
    Begin by expressing $=^*$ as an increasing union of finite Borel equivalence relations $F_i$.  We inductively define an     increasing sequence of functions $j_i\colon\omega^\omega\to\omega$ as follows.  Given $j_i$, define an auxiliary function $k_{i+1}$ so that for all $f$, the interval $[j_i(f),k_{i+1}(f))$ contains an interval of the form $[n,f(n))$.  Then, let 
    \[
    j_{i+1}(f)=\max\set{k_{i+1}(g):g\mathrel{F_i}f}\;.
    \]
    We now let $\lambda(f)=\seq{j_i(f)}$, and it is clear that $\lambda$ is as desired.
  \end{claimproof}

  We now consider the composition $\lambda\circ\psi$, which has the property that for all $m$, almost every $[\lambda\circ\psi(x)(n),\lambda\circ\psi(x)(n+1))$ contains an element of $\phi(x)(m)$.  We may therefore define
  \[
  \psi'(x)=\bigcup_{\textrm{$n$ odd}}[(\lambda\circ\psi)(x)(n),(\lambda\circ\psi)(x)(n+1))\;,
  \]
  and we will have that $\psi'(x)$ splits each member of the family enumerated by $\phi(x)$.
\end{proof}

\begin{lem}
  $\sfr\rightarrow\sfu$
\end{lem}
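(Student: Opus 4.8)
The plan is to observe that property $\sfu$ is nothing more than property $\sfr$ restricted to a smaller class of inputs, so that the implication requires no real work. Comparing the two relevant clauses of Definition~\ref{df:list}, the conclusion demanded in each case is word-for-word identical: one seeks a Borel homomorphism $\psi\from X\into[\omega]^\omega$ from $E$ to $=^*$ satisfying $\abs{\psi(x)\cap\phi(x)(n)}=\abs{\psi(x)^c\cap\phi(x)(n)}=\aleph_0$ for all $x$ and $n$. The sole difference is that in property $\sfu$ the Borel homomorphism $\phi$ is additionally required to enumerate a centered family $\set{\phi(x)(n):n\in\omega}$ for each $x$, whereas property $\sfr$ imposes no such restriction and must handle \emph{every} Borel homomorphism $\phi\from X\into([\omega]^\omega)^\omega$ from $E$ to $E_\sset$.

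Carrying this out, I would suppose that $E$ has property $\sfr$ and let $\phi\from X\into([\omega]^\omega)^\omega$ be any Borel homomorphism from $E$ to $E_\sset$ for which each family $\set{\phi(x)(n):n\in\omega}$ is centered. Such a $\phi$ is in particular a Borel homomorphism from $E$ to $E_\sset$ of exactly the type quantified over in the definition of $\sfr$, so property $\sfr$ directly supplies a Borel homomorphism $\psi\from X\into[\omega]^\omega$ from $E$ to $=^*$ with $\abs{\psi(x)\cap\phi(x)(n)}=\abs{\psi(x)^c\cap\phi(x)(n)}=\aleph_0$ for all $x$ and $n$. This same $\psi$ witnesses property $\sfu$ for $\phi$, and since $\phi$ was an arbitrary centered homomorphism, $E$ has property $\sfu$.

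From the vantage point of Section 5, this is the statement that the identity pair $\xi_-=\xi_+=\id$ is an invariant Borel morphism from the relation underlying $\frr$ and $\fru$ to itself, instantiating the general criterion with $\Phi$ the property ``centered'' and $\Psi$ vacuous; it reflects the trivial inequality $\frr\leq\fru$ obtained by relativizing the norm of a single relation to a smaller collection of families. Accordingly I expect no genuine obstacle here: the only points to verify are that the two conclusion clauses coincide verbatim and that imposing centeredness merely shrinks the class of $\phi$ that must be treated, neither of which requires any computation.
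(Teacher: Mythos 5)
Your proposal is correct and takes essentially the same route as the paper: under Definition~\ref{df:list}, property $\sfu$ quantifies over a strictly smaller class of homomorphisms $\phi$ (those enumerating centered families) while demanding the identical splitting conclusion, so any witness provided by $\sfr$ serves verbatim as a witness for $\sfu$. The paper's one-sentence proof makes the same observation, phrased in cardinal-invariant language: a set that splits every member of a family also witnesses that the family is not an ultrafilter base.
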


\begin{proof}
  Once again the identity morphism suffices, because if some set $Y$ witnesses that a family $\mathcal F$ is not unsplittable, then it also witnesses that $\mathcal F$ is not an ultrafilter base. (Property $\sfr$ and $\sfu$ are the same except $\sfu$ has a centered hypothesis on the family.)
\end{proof}

\begin{lem}
  $\sfr\rightarrow\sfi$.
\end{lem}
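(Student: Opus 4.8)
The plan is to carry out the classical proof that $\frr\leq\fri$ in a Borel and invariant fashion, using property $\sfr$ to supply the reaping set that extends the independent family. Recall the classical argument: given a countably infinite independent family $\mathcal I=\set{A_n:n\in\omega}$, one forms the countable family $\mathcal C$ of all finite Boolean combinations $\bigcap_{i\in F}A_i\cap\bigcap_{j\in G}A_j^c$, where $F,G\subset\omega$ are finite and disjoint; each such \emph{cell} is infinite by independence. If $Y$ is any single set that splits every member of $\mathcal C$, then $\mathcal I\cup\set{Y}$ is again independent, since every cell of the enlarged family is either a cell of $\mathcal I$ or has the form (cell of $\mathcal I)\cap Y$ or (cell of $\mathcal I)\cap Y^c$, which is infinite precisely because $Y$ splits that cell; moreover $Y\neq A_n$ for each $n$, since $Y$ splits $A_n$ while no set splits itself. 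Thus a reaping set for $\mathcal C$ is exactly what is needed, and producing it is what property $\sfr$ does.

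Concretely, I would first fix a Borel enumeration $m\mapsto\seq{F_m,G_m}$ of all pairs of disjoint finite subsets of $\omega$. Given a Borel homomorphism $\phi\from X\into([\omega]^\omega)^\omega$ from $E$ to $E_\sset$ for which each family $\set{\phi(x)(n):n\in\omega}$ is independent, I would define $\phi'\from X\into([\omega]^\omega)^\omega$ by
\[\phi'(x)(m)\defeq\bigcap_{i\in F_m}\phi(x)(i)\cap\bigcap_{j\in G_m}\phi(x)(j)^c\;.\]
Independence guarantees $\phi'(x)(m)\in[\omega]^\omega$, and $\phi'$ is plainly Borel. The crucial point is that $\phi'$ is a homomorphism from $E$ to $E_\sset$: the members of an independent family are distinct (if $A_i=A_j$ then $A_i\cap A_j^c=\varnothing$), so the collection of all finite Boolean combinations depends only on the family \emph{as a set}; hence $\set{\phi(x)(n):n}=\set{\phi(y)(n):n}$ for $x\mathrel{E}y$ yields $\set{\phi'(x)(m):m}=\set{\phi'(y)(m):m}$.

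I would then apply property $\sfr$ to $\phi'$ to obtain a Borel homomorphism $\psi\from X\into[\omega]^\omega$ from $E$ to $=^*$ with $\abs{\psi(x)\cap\phi'(x)(m)}=\abs{\psi(x)^c\cap\phi'(x)(m)}=\aleph_0$ for all $x$ and $m$; that is, $\psi(x)$ splits every cell of $\set{\phi(x)(n):n}$. Running the verification above with $Y=\psi(x)$ shows that $\set{\psi(x)}\cup\set{\phi(x)(n):n}$ is independent and that $\psi(x)\notin\set{\phi(x)(n):n}$, which is exactly the conclusion demanded by property $\sfi$. The only steps requiring genuine care are the two structural observations — that each cell is infinite (this is precisely the independence hypothesis) and that $\phi'$ respects $E_\sset$ — and I expect the latter, the set-versus-enumeration point, to be the subtlest part, though it becomes routine once one isolates that $\mathcal C$ depends on $\mathcal I$ only through its range.
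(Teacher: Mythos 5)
Your proof is correct and is essentially the paper's own argument: the paper likewise replaces $\phi$ by a sequence $\phi'$ enumerating all finite intersections of sets from $\phi(x)$ with complements of other sets from $\phi(x)$, applies property $\sfr$ to split every such cell, and observes that the resulting $\psi(x)$ witnesses that the family is not maximal independent; your write-up merely fills in the Borel bookkeeping (the pairing $m\mapsto\seq{F_m,G_m}$ and the range argument for $E_\sset$) that the paper leaves implicit. One small wrinkle: since the paper's hypothesis is on the range $\set{\phi(x)(n):n\in\omega}$, the enumeration may repeat sets, so an index-cell with $i\in F_m$, $j\in G_m$ and $\phi(x)(i)=\phi(x)(j)$ would be empty rather than infinite; this is repaired by replacing any such degenerate cell with, say, $\phi(x)(0)$, which affects nothing because the collection of nonempty index-cells still depends only on the range, so $\phi'$ remains a homomorphism to $E_\sset$ and your verification goes through verbatim.
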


\begin{proof}
  Suppose the Borel homomorphism $\phi\colon E\to E_\sset$ has the property that every $\phi(x)$ enumerates an independent family.  Let $\phi'(x)$ enumerate all possible intersections of finitely many sets from $\phi(x)$ with the complements of finitely many other sets from $\phi(x)$.  Since $E$ has property $\sfr$, there exists a Borel homomorphism $\psi\colon E\to E_0$ such that for every $x$ and $n\in\omega$, $\psi(x)$ splits $\phi'(x)(n)$.  Then $\psi(x)$ also witnesses that the family $\phi(x)$ is not maximal independent.
\end{proof}

This completes the proof of Theorem~\ref{thm:diagram}.

\bibliographystyle{alpha}
\begin{singlespace}
     \bibliography{char-cber}

\begin{thebibliography}{CSM13}

\bibitem[BJ07]{bj}
Charles~M. Boykin and Steve Jackson.
\newblock Borel boundedness and the lattice rounding property.
\newblock In {\em Advances in logic}, volume 425 of {\em Contemp. Math.}, pages
  113--126. Amer. Math. Soc., Providence, RI, 2007.

\bibitem[Bla96]{blass-borel}
Andreas Blass.
\newblock Reductions between cardinal characteristics of the continuum.
\newblock In {\em Set theory ({B}oise, {ID}, 1992--1994)}, volume 192 of {\em
  Contemp. Math.}, pages 31--49. Amer. Math. Soc., Providence, RI, 1996.

\bibitem[Bla10]{blass}
Andreas Blass.
\newblock Combinatorial cardinal characteristics of the continuum.
\newblock In {\em Handbook of set theory. {V}ols. 1, 2, 3}, pages 395--489.
  Springer, Dordrecht, 2010.

\bibitem[CSM13]{tukey}
Samuel Coskey, Juris Stepr\=ans, and Tam\'as M\'atrai.
\newblock Borel {T}ukey morphisms and combinatorial cardinal invariants.
\newblock {\em Fundamenta Mathematicae}, 223:29--48, 2013.

\bibitem[DJK94]{djk}
R.~Dougherty, S.~Jackson, and A.~S. Kechris.
\newblock The structure of hyperfinite {B}orel equivalence relations.
\newblock {\em Trans. Amer. Math. Soc.}, 341(1):193--225, 1994.

\bibitem[Gao09]{gao}
Su~Gao.
\newblock {\em Invariant descriptive set theory}, volume 293 of {\em Pure and
  Applied Mathematics (Boca Raton)}.
\newblock CRC Press, Boca Raton, FL, 2009.

\bibitem[HKL90]{hkl}
L.~A. Harrington, A.~S. Kechris, and A.~Louveau.
\newblock A {G}limm-{E}ffros dichotomy for {B}orel equivalence relations.
\newblock {\em J. Amer. Math. Soc.}, 3(4):903--928, 1990.

\bibitem[JKL02]{jkl}
S.~Jackson, A.~S. Kechris, and A.~Louveau.
\newblock Countable {B}orel equivalence relations.
\newblock {\em J. Math. Log.}, 2(1):1--80, 2002.

\bibitem[Kec95]{kechris}
Alexander~S. Kechris.
\newblock {\em Classical descriptive set theory}, volume 156 of {\em Graduate
  Texts in Mathematics}.
\newblock Springer-Verlag, New York, 1995.

\bibitem[Mil02]{mildenberger}
Heike Mildenberger.
\newblock No {B}orel connections for the unsplitting relations.
\newblock {\em MLQ Math. Log. Q.}, 48(4):517--521, 2002.

\bibitem[Tho09]{mc}
Simon Thomas.
\newblock Martin's conjecture and strong ergodicity.
\newblock {\em Arch. Math. Logic}, 48(8):749--759, 2009.

\bibitem[Voj93]{voj}
P~Vojt\'{a}\v{s}.
\newblock Generalized galois-tukey connections between explicit relations on
  classical objects of real analysis.
\newblock In H.~Judah, editor, {\em Set Theory of the Reals}, volume~6 of {\em
  Israel Mathematical Conference Proceedings}, pages 619--643. Amer. Math.
  Soc., 1993.

\bibitem[Zap04]{zap}
Jind{\v{r}}ich Zapletal.
\newblock Descriptive set theory and definable forcing.
\newblock {\em Mem. Amer. Math. Soc.}, 167(793):viii+141, 2004.

\end{thebibliography}
\end{singlespace}

\end{document}